\newtheorem{theorem}{Theorem}[section]
\newtheorem{corollary}[theorem]{Corollary}
\newtheorem{lemma}[theorem]{Lemma}
\newtheorem{proposition}[theorem]{Proposition}
\theoremstyle{definition}
\newtheorem{definition}[theorem]{Definition}
\newtheorem{example}[theorem]{Example}
\newtheorem{remark}[theorem]{Remark}
\numberwithin{equation}{section}
\DeclareMathOperator{\oh}{\mathcal{O}}
\author{Xinyi Fang}
\address{Xinyi Fang, Department of Mathematics, Shanghai Normal University, Shanghai, 200234, PR China}
\thanks{1.Xinyi Fang is supported by National Natural Science Foundation of China (Grant No. 12471040)}
\email{xinyif@shnu.edu.cn}
\author{Duo Li}
\address{Duo Li, Sun-Yat Sen University, School of Mathematics(Zhuhai), Zhuhai, Guangdong, 519082, PR China}
\thanks{2.Duo Li is supported by National Natural Science Foundation of China (Grant No. 12001547)}
\email{liduo5@mail.sysu.edu.cn}
\author{Yanjie Li}
\address{Yanjie Li, AMSS, Chinese Academy of Sciences, 55 ZhongGuanCun East Road, Beijing, 100190, China and  University of Chinese Academy of Sciences, Beijing, China}
\email{liyanjie241@mails.ucas.ac.cn}
\title{ uniform bundles on quadrics}
\begin{document}
\begin{abstract}
We show that there exist only constant morphisms from $\mathbb{Q}^{2n+1}(n\geq 1)$ to $\mathbb{G}(l,2n+1)$ if $l$ is even $(0<l<2n)$ and $(l,2n+1)$ is not $ (2,5)$. As an application, we prove on $\mathbb{Q}^{2m+1}$ and $\mathbb{Q}^{2m+2}(m\geq 3)$, any uniform bundle of rank at most $2m$ splits, which improves the upper bound of splitting for uniform bundles obtained by Kachi and Sato \cite{KS}. We classify all unsplit uniform bundles of minimal rank on $B_n/P_k$ $(k=\frac{2n}{3},k\ge6)$ and $D_n/P_k$ $(k=\frac{2n-2}{3},k\ge 6)$. We partially answer a conjecture of Ellia, which predicts that some uniform bundles of special splitting types on $\mathbb{P}^n$ necessarily split and we find some restrictions on the splitting types of unsplit uniform bundles of minimal rank.
\end{abstract}
\maketitle
\textbf{Keywords}: uniform bundle; quadric; generalized Grassmannian.\\

\textbf{MSC}: 14M15; 14M17; 14J60.

\section{Introduction}
In this article, we assume all the varieties are defined over $\mathbb{C}$. We assume that $X$ is a rational homogeneous variety of Picard number $1$ and we call
$X$ a generalized Grassmannian for short. It is well known that $X$ is swept by lines.  Let $E$ be a vector bundle on $X$. We consider  its restriction  $E|_{L}(\simeq \mathcal O_L(a_1(L)\oplus\cdots\oplus\mathcal O_L(a_r(L)))$ to any line $L\subseteq X$. If the splitting type $ (a_1(L),\dots,a_r(L)) $ of   $E|_{L}$ is independent of the choice of $L$,  $E$ is  called a \emph{uniform bundle}. We say a vector bundle \emph{splits} if it can be decomposed as a direct sum of line bundles. We say a vector bundle does not split or a vector bundle is \emph{unsplit} if it cannot be decomposed as a direct sum of line bundles. \\

In \cite{Gro},  Grothendieck shows that every vector bundle on a projective line splits. However, for higher dimensional projective spaces, the splitting of vector bundles is far more intricate.  In \cite{Van}, Van de Ven studies the splitting property  of uniform bundles. He
demonstrates that every uniform bundle of rank $2$ on  $\mathbb{P}^n$ $(n\ge 3)$ splits  and every uniform bundle of rank $2$ on  $\mathbb{P}^2$ is isomorphic to $\mathcal{O}_{\mathbb{P}^2}(a)\oplus \mathcal{O}_{\mathbb{P}^2}(b)$ or $T_{\mathbb{P}^2}(a)$ for some integers $a$ and $b$.  The subsequent work by   Sato \cite{sato1976uniform} and Elencwajg, Hirschowitz, Schneider \cite{EHS} extends these results:
 every uniform bundle on $\mathbb{P}^n$ of  rank  smaller than $n$ splits, while every unsplit uniform bundle  on $\mathbb{P}^n$ of rank $n$ is isomorphic to  $T_{\mathbb{P}^n}(a)$ or $\Omega_{\mathbb{P}^n}(b)$ for some integers $a$ and $b$. \\
 
 Motivated by these advances, we address two central  problems for uniform bundles on a generalized Grassmannian $X$.
 \begin{itemize}
   
     \item \textbf{Problem $1$:} Determine the  splitting threshold $\mu(X) $ such that any uniform bundle of rank at most $\mu(X)$ splits and there exists an unsplit uniform vector bundle of rank $\mu(X)+1$.
     \item \textbf{Problem $2$:}  Classify all uniform bundles of rank $\mu(X)+1$.
   
 \end{itemize}

\quad

Let  $\mathbb{G}(k-1,n-1)$ be the Grassmannian of $k$-dimensional subspaces of $\mathbb{C}^n$. In \cite{guyot1985caracterisation}, Guyot solves the above problems  for  Grassmannians $X=\mathbb{G}(k-1,n-1)(k\leq n-k)$.
For orthogonal Grassmannians $OG(n,2n+1)$,    Mu\~{n}oz, Occhetta and Sol\'{a} Conde \cite{munoz2012uniform} achieve similar results.  \\

Let $\mathbb{Q}^n$ be a projective smooth quadric of dimension $n$, with $n\geq 5$ odd (resp. even). Kachi and Sato \cite[Theorem 4.1]{KS} prove that any uniform vector bundle on $\mathbb{Q}^n$ of rank at most $n-2$ (resp. $n-3$) splits. Mu\~{n}oz, Occhetta and Sol\'{a} Conde provide an alternative proof via a general splitting criterion for low-rank uniform bundles  on varieties covered by lines (see \cite[Corollary 3.3]{munoz2012uniform}). However we will demonstrate their upper bounds are not optimal. Actually, we show that  for $\mathbb{Q}^{2n+1}$ and $\mathbb{Q}^{2n+2}$ $ (n \geq 3)$, every uniform bundle of rank $2n$ splits. \\

In our recent work \cite{FLL24}, we systematically address \textbf{Problem $1$} and \textbf{Problem $2$} for all generalized Grassmannians. A key insight is establishing  a connection between the geometry of VMRT  (variety of minimal rational tangents)  and the splitting behavior of uniform bundles. Specifically, we demonstrate that for an arbitrary generalized Grassmannian $X$,  a uniform bundle $E$ of rank  at most the so-called  $e.d.(\mathrm{VMRT})$ (for the definition of  $e.d.(\mathrm{VMRT})$, see \cite[Section $3$]{FLL24}) must split. Furthermore, for the majority of generalized Grassmannians, the bound $e.d.(\mathrm{VMRT})$ coincides with $\mu(X)$. Our approach diverges from Guyot's algebraic strategy by adopting a more geometric and direct methodology. Notably,  for most generalized Grassmannians whose VMRTs are products of several irreducible varieties, we reduce  \textbf{Problem $2$} to the classification of uniform bundles on projective spaces and quadrics, see \cite[Proposition $4.4$]{FLL24}. This reduction highlights the pivotal role of quadrics in resolving the problem.\\

 Given a generalized Grassmannian $X$, assume  $\mu(X)$ is $e.d.(\text{VMRT})$. Let $E$ be an unsplit uniform bundle of minimal rank on $X$, that is, the rank of $E$ is $\mu(X)+1$. In \cite[Proposition 3.8]{FLL24}, we show that, up to twisting by a suitable line bundle, the splitting type of $E$ is $(1,\dots,1,0,\dots,0)$. Inspired by this result, when $\mu(X)$ is not necessarily $e.d.(\text{VMRT})$,  we show that there are some restrictions on the splitting type of $E$. To be concrete,  if the splitting type of $E$ is $(a_1,a_2,\dots,a_r)$ $(a_1\ge a_2\ge\cdots\ge a_r)$, then $ max\{a_i-a_{i+1}|1\le i\le r-1\}$ is $1$. By using the same strategy, we give an affirmative partial answer to a conjecture of Ellia (see \cite[Page 29, Conjecture]{Ellia}), which predicts that every uniform bundle on $\mathbb{P}^{n}$ of splitting type
 $(\underbrace{a_1,\dots,a_1}_{l_1},\underbrace{a_2,\dots,a_2}_{l_2},\dots, \underbrace{a_k,\dots,a_k}_{l_k})$ with $a_i>a_{i+1}$ and $l_i\le n-1$ for any  $1\le i\le k$ necessarily splits. Motivated by Ellia's conjecture, we propose a similar conjecture for any generalized Grassmannian as follows:\\
 
 \textbf{Conjecture:} For an arbitrary generalized Grassmannian $X$, there exists a maximal positive integer $\nu(X)$ such that every uniform bundle of splitting type \[(\underbrace{a_1,\dots,a_1}_{l_1},\underbrace{a_2,\dots,a_2}_{l_2},\dots, \underbrace{a_k,\dots,a_k}_{l_k})\] with $a_i> a_{i+1}$ and $l_i\le \nu(X)$ for any $1\le i\le k$ necessarily splits.\\
 
In this article, we establish: \\ 

\textbf{Main Results:}  \begin{enumerate}
    \item 
 For $\mathbb{Q}^{2n+1}$ and $\mathbb{Q}^{2n+2}$ $ (n \geq 3)$, every uniform bundle of rank $2n$ splits, with $ \mu(\mathbb{Q}^{2n+1}) = 2n $.  
\item  For $ B_n/P_k$ $ (k = \frac{2n}{3}) $ and $ D_n/P_k $$( k = \frac{2n-2}{3})$, we classify all unsplit uniform bundles of minimal rank.  
\item Give an affirmative partial answer to a conjecture of Ellia in a more general setting and find some restrictions on the splitting types of unsplit uniform bundles of minimal rank.
\end{enumerate} 
\quad\\

Note that for $\mathbb{Q}^{2n+1}$, $e.d.(\mathrm{VMRT})$ is $2n-1$ and the number $\mu(\mathbb{Q}^{2n+1})$ is $2n$.
  So this is the \textbf{first known example} such that the optimal upper bound $\mu(X)$ is bigger than $e.d.(\mathrm{VMRT})$.\\
  
 This article is structured as follows: 
 \begin{enumerate}
     \item 
 
 Section $2$: Analyze morphisms from \( \mathbb{Q}^{2n+1} \) to $\mathbb{G}(l,2n+1)$  and the main result of this section is Proposition \ref{mor from Q to Gr Constant}.  
\item  Theorem \ref{split of rk 2n bd on Q}: Prove splitting theorems for quadrics via relative Harder-Narasimhan filtrations and approximate solutions.
\item Corollaries $3.13–3.16$: Extend classifications to generalized Grassmannians \( B_n/P_k \) and \( D_n/P_k \).  
\item Section $4$: Prove some uniform bundles of special splitting types necessarily split.
\end{enumerate}

\section{Morphisms from quadrics to Grassmannians}
\begin{proposition}\label{mor from Q to Gr Constant}
    There exist only constant morphisms from $\mathbb{Q}^{2n+1}(n\geq 1)$ to $\mathbb{G}(l,2n+1)$ if $l$ is even $(0<l<2n)$ and $(l,2n+1)$ is not $ (2,5)$.
\end{proposition}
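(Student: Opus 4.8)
The plan is to pull back the tautological exact sequence $0\to S\to\mathcal{O}^{\oplus(2n+2)}\to Q\to 0$ on $\mathbb{G}(l,2n+1)=\mathrm{Gr}(l+1,2n+2)$ along a morphism $f\colon\mathbb{Q}^{2n+1}\to\mathbb{G}(l,2n+1)$ and to prove $c_1(f^{*}Q)=0$. Since $c_1(f^{*}Q)=f^{*}\sigma_1$ is the pullback of the Plücker hyperplane class and $\operatorname{Pic}(\mathbb{Q}^{2n+1})\hookrightarrow H^{2}(\mathbb{Q}^{2n+1},\mathbb{Z})$, vanishing of $c_1(f^{*}Q)$ makes $f^{*}\mathcal{O}_{\mathbb{G}}(1)$ trivial, hence $f$ constant. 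Write $c_1(f^{*}Q)=dh$ with $d\ge0$; if $d\ge1$ then $f^{*}\mathcal{O}_{\mathbb{G}}(1)$ is ample, so $f$ is finite onto its image $Y$, a $(2n+1)$-dimensional subvariety of $\mathbb{G}(l,2n+1)$ of Plücker degree $2d^{2n+1}/\deg(f)$. Everything thus reduces to excluding $d\ge1$.

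The key tool is the Chow ring $A^{\bullet}(\mathbb{Q}^{2n+1})\cong\mathbb{Z}[h,w]/(h^{n+1}-2w,\,w^{2})$ with $\deg w=n+1$, whose positive generator in degree $i$ is $g_i=h^{i}$ for $i\le n$ and $g_i=wh^{i-n-1}$ for $i>n$; the single anomalous relation $h^{n+1}=2w$, special to odd-dimensional quadrics, is what powers the proof. Both $f^{*}Q$ and $f^{*}S^{\vee}$ are quotients of $\mathcal{O}^{\oplus(2n+2)}$, hence globally generated and nef, so by positivity of Schur polynomials of nef bundles one may write $c_i(f^{*}Q)=q_ig_i$ and $c_i(f^{*}S^{\vee})=s_ig_i$ with $q_i,s_i\in\mathbb{Z}_{\ge0}$, $q_1=s_1=d$, $q_i=0$ for $i>\operatorname{rk}Q=2n+1-l$ and $s_i=0$ for $i>\operatorname{rk}S=l+1$, and moreover the Schur inequalities (e.g.\ $c_1^{2}-c_2\ge0$) hold. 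Substituting into $c(f^{*}S)\,c(f^{*}Q)=c(\mathcal{O}^{\oplus(2n+2)})=1$, using $c_i(f^{*}S)=(-1)^{i}s_ig_i$ together with the numerical coincidence $\operatorname{rk}S+\operatorname{rk}Q=2n+2=\dim\mathbb{Q}^{2n+1}+1$, and then expanding each graded piece in the basis $\{g_i\}$ — the factors $2$ entering precisely in degrees $\ge n+1$ — produces a finite system of identities and inequalities in the $q_i,s_i$. The hypothesis that $l$ is even makes $\operatorname{rk}S$ and $\operatorname{rk}Q$ odd, which is exactly what aligns the alternating signs $(-1)^{i}$ in $c_{\bullet}(f^{*}S)$ with the truncation at degree $2n+2$; from this one deduces $d=q_1=0$.

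The hard part will be solving this system. The natural route is a downward induction on degree: at each step peel off the top Chern class of $f^{*}Q$ or of $f^{*}S^{\vee}$ (equivalently, split off a trivial summand once a top Chern class vanishes) while tracking how the $h^{n+1}=2w$ anomalies interact with $q_i,s_i\ge0$ and with the Schur inequalities, the odd-rank condition being needed for the sign bookkeeping to close. The exclusion of $(l,2n+1)=(2,5)$ is genuine: there $\operatorname{rk}Q=\operatorname{rk}S=3$, $n=2$, and the system does admit a solution with $d=1$, matching an actual non-constant morphism $\mathbb{Q}^{5}\to\mathrm{Gr}(3,6)$, so the argument must break there. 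As an independent check for small $d$, one can argue geometrically: $\mathbb{G}(l,2n+1)$ has no linearly embedded $\mathbb{P}^{2n+1}$ when $0<l<2n$, since its maximal linear subspaces have dimension $\max(l+1,2n+1-l)\le 2n$; hence for $d=1$ the finite map $f$ is birational onto a linear section $\mathbb{G}(l,2n+1)\cap\mathbb{P}^{2n+2}$ of degree $2$, i.e.\ onto a quadric hypersurface, which being irreducible is normal, so $f$ is an isomorphism of $\mathbb{Q}^{2n+1}$ onto a smooth $(2n+1)$-dimensional quadric contained in $\mathbb{G}(l,2n+1)$ — and one excludes this by a Plücker-relations computation, the case $d\ge2$ following from a finer study of the splitting type $f^{*}S|_{L}$ on a general line $L$.
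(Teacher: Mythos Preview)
Your setup is the same as the paper's (which follows Tango): pull back the tautological sequence, use nefness to get nonnegative Chern numbers, and expand $c(f^{*}S)\,c(f^{*}Q)=1$ into a polynomial identity of degree $2n+2$. But the proof stops exactly where the content begins. You yourself write ``The hard part will be solving this system'' and then offer only a heuristic (``downward induction'', ``sign bookkeeping'') rather than an argument; the fallback for $d=1$ ends with ``one excludes this by a Pl\"ucker-relations computation'' that is not performed, and $d\ge2$ is dismissed with a phrase. None of these gaps is routine.

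What is missing is the normalization idea that makes the system tractable. Set $a=\sqrt[2n+2]{c_{l+1}(f^{*}S^{\vee})\cdot c_{2n+1-l}(f^{*}Q)}$ (in your notation, $\sqrt[2n+2]{s_{l+1}q_{2n+1-l}}$ up to the factor coming from $h^{n+1}=2w$). The first nontrivial point is that $a$ is \emph{rational}: following Tango's Lemma~3.3, when $l$ is even one shows $a=c_{m+1}/c_{m}$ with $m=l/2$, and this is precisely where the parity hypothesis enters, not in any ``sign bookkeeping''. The second point is that $a$ is an \emph{integer}: writing $a=s/t$ in lowest terms, $4a^{2n+2}\in\mathbb{Z}$ (the $4$ coming from your $h^{n+1}=2w$ anomaly) forces $t^{2n+2}\mid4$, hence $t=1$ for $n\ge1$. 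Once $a\in\mathbb{Z}_{>0}$, the rescaled quantities $C_{i}=c_{i}/a^{i}$ and $D_{j}=d_{j}/a^{j}$ are positive integers by the factorization of $1-x^{2n+2}$ over $\mathbb{Z}[x]$, and one is reduced verbatim to the integral situation of Tango's Case~(ii), which yields the contradiction (and genuinely fails for $(l,2n+1)=(2,5)$). Without this normalization your ``system'' is a Diophantine problem in half-integers with no evident inductive structure, and the proposal does not indicate how the even-$l$ hypothesis would be used.
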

\begin{proof}
   We  follow the proof of the main theorem in \cite{tango1976morphisms}. Let $H$ denote the cohomology class of a hyperplane on $\mathbb{Q}^{2n+1}$. The cohomology ring of $\mathbb{Q}^{2n+1}$ is
    \[H^{\bullet}(\mathbb{Q}^{2n+1},\mathbb{Z})=\mathbb{Z}\oplus \mathbb{Z}H \oplus \cdots \oplus \mathbb{Z}H^{n-1}\oplus \mathbb{Z}\frac{H^n}{2}\oplus \cdots \oplus \mathbb{Z}\frac{H^{2n+1}}{2}.\]
    Let $U$ (resp. $Q$) be the universal subbundle (resp. quotient bundle) on $\mathbb{G}(l,2n+1)$. Suppose that $f:\mathbb{Q}^{2n+1}\rightarrow \mathbb{G}(l,2n+1)$ is a non-constant morphism. Let $c_i$ and $d_j$ be rational numbers satisfying
    \[c_i(f^*U^{\vee})=c_iH^i \text{ for }1\leq i \leq l+1 ~\text{  and  }~c_j(f^*Q)=d_jH^j \text{ for }1\leq j \leq 2n+1-l.\]
    By \cite[Proposition 2.1 (i)]{tango1974n}, all coefficients $c_i(1\leq i \leq l+1)$ and $d_j(1\leq j \leq 2n+1-l)$ are non-negative. We note that for any $1\le i,j<n$, $c_i$ and $d_j$ are integers. For any $i,j\geq n$, $2c_i$ and $2d_j$ are integers. 
    Then from the exact sequence $0\rightarrow f^*U \rightarrow \mathcal{O}_{\mathbb{Q}^{2n+1}}^{\oplus 2n+2} \rightarrow f^*Q \rightarrow 0$, we get the equality of polynomials:
    \begin{align}\label{eq for chern}
        &(1-c_1t+c_2t^2+\cdots+(-1)^{l+1}c_{l+1}t^{l+1})(1+d_1t+\cdots+d_{2n+1-l}t^{2n+1-l})\\ 
        =&1+(-1)^{l+1}c_{l+1}d_{2n+1-l}t^{2n+2}.\nonumber
    \end{align}
    If $c_{l+1}d_{2n+1-l}$ is $0$, then  from (\ref{eq for chern}), we obtain both $c_1$ and $d_1$ are zero by induction, which implies that $f$ is constant. So we may assume the numbers $c_1,d_1,c_{l+1}$ and $d_{2n+1-l}$ are non-zero. By \cite[Proposition 2.1 (ii)]{tango1974n}, all the rational numbers $c_1,c_2,\dots,c_{l+1}$ and $d_1,d_2,\dots,d_{2n+1-l}$ are positive.\\

    Let $a$ be $\sqrt[2n+2]{c_{l+1}d_{2n+1-l}}$. We set $C_i:=\frac{c_i}{a^i}(1\leq i \leq l+1)$ and $D_j:=\frac{d_j}{a^j}(1\leq j \leq 2n+1-l)$. We note that if $a,C_i$ and $D_j$ are all positive integers, then by the same proof of \cite[Page 204, Case (ii)]{tango1976morphisms}, we can get a contradiction (for details, see Appendix \ref{appA}, Claim). Then it suffices to show that $a,C_i$ and $D_j$ are integers.\\
    
    We first show that $a$ is an integer. Since $l$ is even, similar to the proof in \cite[Lemma 3.3 (ii) and Case 1 of (iii)]{tango1974n}, we can show that $a$ is $\frac{c_{m+1}}{c_m}$ (see Appendix \ref{appA}, Corollary \ref{a is rational}), where $m$ is $\frac{l}{2}$. So $a$ is rational, we may assume $a=s/t$, where $s$ and $t$ are coprime positive integers. By definition, we have $a^{2n+2}=c_{l+1}d_{2n+1-l}$. Since $2c_{l+1}$ and $2d_{2n+1-l}$ are integers, $4a^{2n+2}$ is an integer. So $t^{2n+2}$ divides $4$, which implies that $t$ is $1$, as $n$ is at least $1$. Hence $a$ is an integer.  \\  

    We now show that $C_i(=\frac{c_i}{a^i})$ and $D_j(=\frac{d_j}{a^j})$ are integers. Let $F_1(x)\cdots F_k(x)$ be the irreducible factorization of $1-x^{2n+2}$ over $\mathbb{Z}[x]$ with $F_l(0)=1(1\leq l \leq k)$. Then $F_l(x)$ is also irreducible over $\mathbb{Q}[x]$ by Gauss's Lemma. Then
    \begin{align}\label{factor}
        &(1-c_1t+c_2t^2+\cdots+(-1)^{l+1}c_{l+1}t^{l+1})(1+d_1t+\cdots+d_{2n+1-l}t^{2n+1-l})\nonumber\\ 
        =&1+(-1)^{l+1}c_{l+1}d_{2n+1-l}t^{2n+2}=1-a^{2n+2}t^{2n+2}=F_1(at)\cdots F_k(at).\nonumber
    \end{align}
    Note that $F_i(at)$ is also irreducible over $\mathbb{Q}[t]$. We have
    \begin{align}
        &1-c_1t+c_2t^2+\cdots+(-1)^{l+1}c_{l+1}t^{l+1}=F_{i_1}(at)\cdots F_{i_{k_1}}(at)~\text{and}~\nonumber\\
        &1+d_1t+\cdots+d_{2n+1-l}t^{2n+1-l}=F_{j_1}(at)\cdots F_{j_{k_2}}(at).\nonumber
    \end{align}
    Since the coefficients of $F_l(x)$ are integers,  $C_i(=\frac{c_i}{a^i})$ and $D_j(=\frac{d_j}{a^j})$ are integers.
\end{proof}
\section{Uniform bundles on $\mathbb{Q}^{2n+1}(n\geq 3)$}
We are going to use the method in \cite{EHS} to show that any uniform bundle of rank $2n$ on $\mathbb{Q}^{2n+1}(n\geq 3)$ splits. We first fix some notations.\\

Let $E$ be a uniform bundle on $X(=\mathbb{Q}^{2n+1})(n\geq 3)$ of rank $2n$. Assume that $E$ does not split, by \cite[Proposition $3.7$]{FLL24},  we may assume that the splitting type of $E$ is 
\[(\underbrace{0,\dots,0}_{l+1},\underbrace{-1,\dots,-1}_{2n-l-1})\quad (l+1\geq 2n-l-1 \geq 1).
    \]
We denote the moduli of lines on $X$ and the corresponding universal family by:
 \[
    \begin{tikzcd}
        \mathcal{U}(=B_{n+1}/P_{1,2})\arrow[r,"q"]\arrow[d,"p"]& \mathcal{M}(=B_{n+1}/P_2)\\
        X(=B_{n+1}/P_1).
    \end{tikzcd}
 \]
 where $\mathcal{M}$ is the moduli of lines and $\mathcal{U}$ is the universal family.
 The relative Harder-Narasimhan (H-N) filtration of $p^*E$ induces an exact sequence:
 \begin{align}\label{relative HN fli}
     0\rightarrow E_1(=q^*G_1) \rightarrow p^*E \rightarrow E_2(=q^*G_2\otimes p^*\oh_X(-1)) \rightarrow 0,
 \end{align}
 where $G_1$ (resp. $G_2$) is a vector bundle on $\mathcal{M}$ of rank $l+1$ (resp. $2n-l-1$). For each $x\in X$, the restriction of relative H-N filtration to $p^{-1}(x)$ induces a morphism 
 \begin{align}\label{varphi}\psi_x:p^{-1}(x)(\cong \mathbb{Q}^{2n-1})\rightarrow Gr(l+1,2n)(\cong \mathbb{G}(l,2n-1)).\end{align}
 
By \cite[Lemma $2.1$]{FLL24}, we have the following description of cohomology rings:
\begin{align*}
    &H^{\bullet}(X,\mathbb{Q})=\mathbb{Q}[X_1]/(X_1^{2n+2}),\nonumber\\
    &H^{\bullet}(\mathcal{U},\mathbb{Q})=\mathbb{Q}[X_1,X_2]/(\Sigma_{i}(X_1^2,X_2^2)_{n\leq i \leq n+1}),\nonumber\\
    &H^{\bullet}(\mathcal{M},\mathbb{Q})=\mathbb{Q}[X_1+X_2,X_1X_2]/(\Sigma_{i}(X_1^2,X_2^2)_{n\leq i \leq n+1})\nonumber.
\end{align*}
For a  bundle $F$ of rank $r$, the Chern polynomial of $F$ is defined as
$$C_{F}(T):=T^r-c_1(F)T^{r-1}+\cdots+(-1)^rc_r(F).$$
Let $E(T,X_1)=\sum_{k=0}^{2n}e_kX_1^kT^{2n-k}(\in \mathbb{Q}[X_1,T])$ and $S_{i}(T,X_1,X_2)(\in \mathbb{Q}[X_1+X_2,X_1X_2,T])(i=1,2)$ be homogeneous polynomials representing $C_{p^*E}(T)$ and $C_{q^*G_i}(T)$ in the cohomology rings respectively. There are equations
\begin{align*}
    E(T,X_1)=C_{p^*E}(T)~ \text{and}~S_i(T,X_1,X_2)=C_{q^*G_i}(T)(i=1,2)\nonumber.
\end{align*}
Let $R(X_1,X_2)$ be the polynomial $\Sigma_n(X_1^2,X_2^2)$. By (\ref{relative HN fli}), we have an equation of Chern polynomials:
\begin{align}\label{the eq to solve}
    E(T,X_1)-aR(X_1,X_2)=S_1(T,X_1,X_2)S_2(T+X_1,X_1,X_2)\tag{*}.
\end{align}
If $a$ is $0$, then both $S_1(T,X_1,X_2)$ and $S_2(T+X_1,X_1,X_2)$ are polynomials only in variables $T$ and $X_1$. Since $S_i(T,X_1,X_2)(i=1,2)$ are symmetric in $X_1$ and $X_2$, we must have $S_1(T,X_1,X_2)=T^{l+1}$ and $S_2(T,X_1,X_2)=T^{2n-l-1}$. So $c_1(E_1)$ and $c_1(E_2)$ are $0$, and $\psi_x$ is constant for each $x\in X$, which implies that $E$ splits (see, for example, \cite[Proposition $3.5$]{FLL24}). Therefore, we have $a\ne 0$. 
\subsection{Approximate solutions}
To solve the equation (\ref{the eq to solve}), we use the concept of approximate solutions introduced in \cite[Section 5]{EHS}. The following definitions and propositions are basically from \cite[Section 5]{EHS} and the proofs are similar.

\begin{definition}\label{app solu}
    A non-zero homogeneous polynomial $P(T,X_1)$ with rational coefficients in variables $T$ and $X_1$ of degree $2n$ is called an approximate solution if $P(T,X_1)-R(X_1,X_2)$ has a proper divisor $S(T,X_1,X_2)$ which is symmetric in $X_1$ and $X_2$. We call such a divisor a symmetric divisor. 
\end{definition}

\begin{example}\label{app solu ass *}
    By  the equation (\ref{the eq to solve}), both $\frac{1}{a}E(T,X_1)$ and $\frac{1}{a}E(T-X_1,X_1)$ are approximate solutions. We call them approximate solutions associated with (\ref{the eq to solve}).
\end{example}
In the following lemma, there are some restrictions on the coefficient of $X_1^{2n}$ for an arbitrary approximate solution $P(T,X_1)$. 
\begin{lemma}\label{Top coeff of app solu}
    Let $P(T,X_1)=\sum_{k=0}^{2n}p_kX_1^kT^{2n-k}$ be an approximate solution. Then one of the following holds. \\
    $(1)$ Any symmetric divisor of $P(T,X_1)-R(X_1,X_2)$ is of degree one.\\
    $(2)$ The coefficient $p_{2n}$ is $0$ and the zero set of $P(0,1)-R(1,z)$ is $(\{z|z^{2n+2}-1=0\})\backslash \{1,-1\}$.\\
    $(3)$ The coefficient $p_{2n}$ is $1$ and the zero set of $P(0,1)-R(1,z)$ is $(\{z|z^{2n}-1=0\}\cup \{0\})\backslash \{1,-1\}$.
\end{lemma}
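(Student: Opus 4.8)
The plan is to analyze the polynomial identity $P(T,X_1) - R(X_1,X_2) = S(T,X_1,X_2)\cdot S'(T,X_1,X_2)$ coming from a symmetrical divisor $S$ of degree $d$ with $1 < d < 2n$, and to extract strong constraints by specializing $T = 0$. Recall that $R(X_1,X_2) = \Sigma_n(X_1^2,X_2^2)$ is (up to normalization) the degree-$2n$ symmetric polynomial that, together with $\Sigma_{n+1}$, cuts out the relations in $H^\bullet(\mathcal U,\mathbb Q)$; concretely one has $X_1^{2n+2} - X_2^{2n+2} = (X_1^2 - X_2^2)\,\Sigma_n(X_1^2,X_2^2)\cdot(\text{something})$, so that $R(1,z)$ as a polynomial in $z$ has its zero set contained in the $(2n+2)$-th roots of unity with $\{1,-1\}$ removed. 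The first step is to record precisely, from the structure of $\Sigma_n$, that $R(1,z) = 0$ exactly on $\mu_{2n+2}\setminus\{1,-1\}$, a set of $2n$ distinct points; this is the computational input and I would cite \cite[Lemma 2.1]{FLL24} or compute directly from $\Sigma_n(X_1^2,X_2^2)\mid X_1^{2n+2}-X_2^{2n+2}$.

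Next, set $T = 0$ in the factorization. Since $P$ is homogeneous of degree $2n$ in $(T,X_1)$, we have $P(0,X_1) = p_{2n}X_1^{2n}$, so $P(0,1) - R(1,z) = p_{2n} - R(1,z)$ as a polynomial in $z$. On the other hand $S(0,1,z)\cdot S'(0,1,z)$ is a factorization of this polynomial into factors of positive degree coming from the symmetry and degree constraints. The key case distinction is whether $\deg_z\big(p_{2n} - R(1,z)\big)$ drops: because $R(1,z)$ has degree $2n$ in $z$ (leading term $\pm z^{2n}$, matching the coefficient of $X_2^{2n}$ in $\Sigma_n$, which is $1$), the difference $p_{2n} - R(1,z)$ has degree exactly $2n$ unless $p_{2n}=1$ causes no cancellation — so in fact the degree is always $2n$; the genuine dichotomy is instead about whether $z=0$ is a root, i.e. whether $p_{2n} = R(1,0)$. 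Computing $R(1,0) = \Sigma_n(1,0)$: from $\Sigma_n(X_1^2,X_2^2) = X_1^{2n} + X_1^{2n-2}X_2^2 + \cdots + X_2^{2n}$ (the complete homogeneous-type relation), $R(1,0) = 1$. So either $p_{2n} = 1$, and then $p_{2n} - R(1,z)$ vanishes at $z=0$ together with its other roots; or $p_{2n} \ne 1$, and then $z = 0$ is not a root. I would then show that the only way the symmetrical factor $S(0,1,z)$ can be a nontrivial polynomial factor (as forced by $d = \deg S \geq 2$) is if the roots of $p_{2n} - R(1,z)$ are arranged in the specified symmetric pattern, forcing $p_{2n} \in \{0,1\}$ and pinning down the zero set to be $\mu_{2n+2}\setminus\{1,-1\}$ (case $p_{2n}=0$, giving the degree-$2n$ polynomial $z^{2n+2}-1$ divided by $z^2-1$) or $\big(\mu_{2n}\cup\{0\}\big)\setminus\{1,-1\}$ (case $p_{2n}=1$).

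The main obstacle — and the step requiring the most care — is justifying rigorously that a symmetrical divisor of degree $\geq 2$ forces $p_{2n}$ into $\{0,1\}$ rather than merely constraining the roots loosely. The idea is: a symmetrical divisor $S(T,X_1,X_2)$, being invariant under $X_1 \leftrightarrow X_2$, must when specialized appropriately produce matching constraints from both the "$X_1$-side" and the "$X_2$-side" of $R$; comparing the $X_2$-leading behavior of $S(T,X_1,X_2)$ against that of $P(T,X_1) - R(X_1,X_2)$ (which has $X_2$-degree $2n$ with leading coefficient governed by $R$) pins down the $X_2$-leading coefficient of $S$, and then reading off the $T^0$ and $X_1$-top coefficients of the factorization forces $p_{2n}(p_{2n}-1) = 0$ — one can see this by evaluating the identity at a point like $(T,X_1,X_2) = (0,1,1)$ where $R(1,1) = \Sigma_n(1,1) = n+1 \ne 0$, or by a resultant/degree count. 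Once $p_{2n} \in \{0,1\}$ is established, identifying the zero set is then the direct computation above, using $p_{2n} - R(1,z) = -R(1,z)$ or $1 - R(1,z)$ and the explicit form of $\Sigma_n$. I would present alternative (1) (all symmetrical divisors have degree one) as the remaining escape hatch when none of these rigidity constraints can be met, completing the trichotomy.
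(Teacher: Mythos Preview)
Your setup and the final computations for $p_{2n}\in\{0,1\}$ are correct, but the crucial step---proving that a symmetrical divisor of degree at least $2$ forces $p_{2n}\in\{0,1\}$---is not established. You yourself flag this as ``the main obstacle,'' and neither of your suggested routes works: evaluating at $(T,X_1,X_2)=(0,1,1)$ just gives the single number $p_{2n}-(n+1)=S(0,1,1)S'(0,1,1)$, which places no constraint on $p_{2n}$ alone; and ``a resultant/degree count'' is too vague to be an argument. The claim $p_{2n}(p_{2n}-1)=0$ does not fall out of comparing leading $X_2$-coefficients either, since the cofactor $S'$ is not symmetrical and absorbs whatever leading behavior you try to pin down.

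The missing idea is to exploit the symmetry of $S$ \emph{before} specializing $X_1=1$. Setting $T=0$, $S(0,X_1,X_2)$ divides $p_{2n}X_1^{2n}-R(X_1,X_2)$; by $X_1\leftrightarrow X_2$ symmetry it also divides $p_{2n}X_2^{2n}-R(X_1,X_2)$, hence divides the difference $p_{2n}(X_1^{2n}-X_2^{2n})$. Separately, from the identity $R(X_1,X_2)(X_1^2-X_2^2)=X_1^{2n+2}-X_2^{2n+2}$ one checks that
\[
(X_1^2-X_2^2)\bigl(p_{2n}X_1^{2n}-R(X_1,X_2)\bigr)+X_2^2(X_1^{2n}-X_2^{2n})=(p_{2n}-1)X_1^{2n}(X_1^2-X_2^2),
\]
so $S(0,X_1,X_2)$ also divides $(p_{2n}-1)X_1^{2n}(X_1^2-X_2^2)$. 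If $p_{2n}\notin\{0,1\}$, then $S(0,X_1,X_2)$ divides both $X_1^{2n}-X_2^{2n}$ and $X_1^{2n}(X_1^2-X_2^2)$; the only common symmetric factor is $c(X_1+X_2)$, forcing $\deg S=1$. This is the clean mechanism the paper uses; your proposal is missing it.
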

\begin{proof}
    Let $S(T,X_1,X_2)$ be a symmetric divisor of $P(T,X_1)-R(X_1,X_2)$. Then
    $S(0,X_1,X_2)$ divides $ p_{2n}X_1^{2n}-R(X_1,X_2) $. As $S(T,X_1,X_2)$ is symmetric in $X_1$ and $X_2$, $S(0,X_1,X_2) $ divides $ p_{2n}X_2^{2n}-R(X_1,X_2). $
    Therefore $S(0,X_1,X_2)$ divides $ p_{2n}(X_1^{2n}-X_2^{2n})$.
    By the equation $R(X_1,X_2)(X_1^2-X_2^2)=X_1^{2n+2}-X_2^{2n+2}$, we have
    \begin{align*}
        (X_1^2-X_2^2)(p_{2n}X_1^{2n}-R(X_1,X_2))+X_2^2(X_1^{2n}-X_2^{2n})=(p_{2n}-1)X_1^{2n}(X_1^2-X_2^2).
    \end{align*}

    If $p_{2n}$ is neither $0$ nor $1$, we have $S(0,X_1,X_2)\mid X_1^{2n}-X_2^{2n}$ and $S(0,X_1,X_2)\mid X_1^{2n}(X_1-X_2)(X_1+X_2)$. Since $S(0,X_1,X_2)$ is symmetric in $X_1$ and $X_2$, $S(0,X_1,X_2)$ is $c(X_1+X_2)$  for some $c\in \mathbb{Q}$. In particular,  $\deg(S)$ is $1$.\\

    If $p_{2n}$ is $0$, then $P(0,1)-R(1,z)$ is $-\frac{z^{2n+2}-1}{z^2-1}(=-(z^{2n}+z^{2n-2}+\cdots+1))$. If $p_{2n}$ is $1$, then $P(0,1)-R(1,z)$ is $-z^2\frac{z^{2n}-1}{z^2-1}=(-(z^{2n}+z^{2n-2}+\cdots+z^2))$. Then the assertions follow immediately.
\end{proof}
\begin{definition}\label{prim app solu}
    We call an approximate solution $P(T,X_1)=\sum_{k=0}^{2n}p_kX_1^kT^{2n-k}$ a primitive approximate solution if $p_{2n}\in \{0,1\}$ and $P(T,X_1)-R(X_1,X_2)$ has a symmetric divisor $S_0(T,X_1,X_2)$ such that there is a  $2(n-p_{2n}+1)$-th primitive unit root $y_0$ satisfying $S_0(0,1,y_0)=0$.
\end{definition}
As in \cite{EHS}, we have the following classifications of primitive approximate solutions.
\begin{proposition}\label{classification for prim solu}
    Let $P(T,X_1)$ be a primitive approximate solutions. If $p_{2n}$ is $0$, we have $P(T,X_1)=bT^{2n}$ for some $b\in \mathbb{Q}$. If $p_{2n}$ is $1$, we have $P(T,X_1)=\Sigma_{n}(bT^2,X_1^2)$ for some $b\in \mathbb{Q}$.
\end{proposition}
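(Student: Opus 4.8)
The goal is to classify primitive approximate solutions $P(T,X_1)=\sum_{k=0}^{2n}p_kX_1^kT^{2n-k}$, using the constraints from Lemma~\ref{Top coeff of app solu} together with the defining divisibility property. Since a primitive approximate solution has $p_{2n}\in\{0,1\}$, we are exactly in case $(2)$ or case $(3)$ of that lemma, so we already know the zero set of $P(0,1)-R(1,z)$ precisely. The plan is to combine this with the existence of a symmetrical divisor $S_0(T,X_1,X_2)$ of $P(T,X_1)-R(X_1,X_2)$ passing through a $2(n-p_{2n}+1)$-th primitive unit root $y_0$ (in the $X_2$-variable at $T=0$), and to propagate this single root through the Galois action and the symmetry in $X_1,X_2$.

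First I would treat the case $p_{2n}=0$. Here $S_0(0,1,z)$ divides $P(0,1)-R(1,z)=-(z^{2n}+z^{2n-2}+\cdots+1)=-\tfrac{z^{2n+2}-1}{z^2-1}$, whose roots are the $(2n+2)$-th roots of unity other than $\pm 1$. By hypothesis $S_0(0,1,y_0)=0$ for a primitive $2(n+1)$-th root of unity $y_0$. Since $S_0$ has rational (indeed, after clearing denominators, integer) coefficients, $S_0(0,1,z)$ is divisible by the cyclotomic polynomial $\Phi_{2(n+1)}(z)$; but I also want to use that $S_0(T,X_1,X_2)$ is symmetrical, so that $S_0(0,X_1,X_2)$ is divisible by the homogenization of this cyclotomic factor in a symmetric way, and then feed this back into the factorization $P(T,X_1)-R(X_1,X_2)=S_0\cdot(\text{cofactor})$ to pin down the $T$-dependence. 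The key point is a degree/irreducibility count: the cofactor must absorb $R(X_1,X_2)$'s contribution, and matching leading $T$-coefficients forces $P$ to be a pure power $bT^{2n}$. I expect one needs the observation that any proper symmetric divisor of $P-R$ that is not linear in $T$ must, after setting $T=0$, be a genuine (non-constant in $X_1,X_2$) symmetric polynomial, and then the cyclotomic rigidity kills all intermediate possibilities.

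For $p_{2n}=1$ the argument is parallel but now $P(0,1)-R(1,z)=-z^2\,\tfrac{z^{2n}-1}{z^2-1}=-(z^{2n}+z^{2n-2}+\cdots+z^2)$, whose nonzero roots are the $2n$-th roots of unity other than $\pm1$, and $y_0$ is a primitive $2n$-th root of unity. The same cyclotomic/symmetry propagation should show that $S_0(0,X_1,X_2)$ contains the symmetric homogenization of $\Phi_{2n}$, and then reconstructing $P(T,X_1)-R(X_1,X_2)$ from its factorization, together with the identity $R(X_1,X_2)=\Sigma_n(X_1^2,X_2^2)$ and $\Sigma_n(X_1^2,X_2^2)(X_1^2-X_2^2)=X_1^{2n+2}-X_2^{2n+2}$, forces $P(T,X_1)-R(X_1,X_2)=\Sigma_n(bT^2,X_1^2)-\Sigma_n(X_1^2,X_2^2)$ for a suitable $b$, i.e. $P(T,X_1)=\Sigma_n(bT^2,X_1^2)$. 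Concretely I would check that $\Sigma_n(bT^2,X_1^2)-\Sigma_n(X_1^2,X_2^2)$ does factor with a symmetric factor hitting $y_0$ (this is the ``only if'' consistency check), and that no other $P$ with $p_{2n}=1$ admits such a factor.

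The main obstacle I anticipate is the bookkeeping that turns ``$S_0(0,1,y_0)=0$ for one primitive root $y_0$'' into ``$S_0(0,X_1,X_2)$ is divisible by the full symmetric cyclotomic block'': one must simultaneously exploit (i) rationality of coefficients, which brings in the whole Galois orbit of $y_0$, and (ii) the $X_1\leftrightarrow X_2$ symmetry, which pairs $y_0$ with $y_0^{-1}$ and forces the homogeneous symmetric shape. Once that block is extracted, the remaining cofactor has small degree and the classification falls out by matching coefficients; but getting the divisibility cleanly — rather than merely $\gcd$ statements — is where the care is needed, and this is exactly the step modeled on the corresponding argument in \cite{EHS}, adapted to the quadric cohomology ring where the relation $\Sigma_n(X_1^2,X_2^2)$ replaces the projective-space relations.
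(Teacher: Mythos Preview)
Your proposal outlines a purely algebraic route via Galois orbits and cyclotomic divisibility, but this is not the argument the paper gives, and as written it has a genuine gap. Everything you extract from rationality and the $X_1\leftrightarrow X_2$ symmetry concerns only $S_0(0,X_1,X_2)$, i.e.\ the slice $T=0$: you learn that $S_0(0,1,z)$ is divisible by the relevant cyclotomic polynomial, hence that $S_0(0,X_1,X_2)$ contains a certain symmetric block. But the statement to be proved is about \emph{all} the coefficients $p_{2n-m}$ of $P$, for $1\le m\le 2n-1$, and your sketch never explains how information at $T=0$ propagates to constrain these. The sentences ``the cofactor has small degree and the classification falls out by matching coefficients'' and ``matching leading $T$-coefficients forces $P$ to be a pure power $bT^{2n}$'' are precisely where the content is missing: nothing in your setup prevents $S_0$ from having complicated $T$-dependence compensated by the cofactor, so a degree count alone does not pin down $P(T,X_1)$.

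The paper's proof (following \cite{EHS}, Propositions~6.1--6.2) is of a different nature. From $S_0\mid P(T,X_1)-R(X_1,X_2)$ and the symmetry of $S_0$ one also gets $S_0\mid P(T,X_2)-R(X_1,X_2)$. Since $y_0$ is a \emph{simple} zero of $S_0(0,1,z)$, the implicit function theorem produces a holomorphic germ $y(x)$ with $S_0(x(1+y(x)),1,y(x))=0$ and $y(0)=y_0$; the two divisibilities then yield the pair of analytic identities $P(x(1+y(x)),1)=R(y(x),1)$ and $P(x(1+y(x)),y(x))=R(y(x),1)$. Differentiating $m$ times at $x=0$ and using that $y_0$ is a primitive $2(n{-}p_{2n}{+}1)$-th root (so $y_0^{2n-m}\ne 1$, $1+y_0\ne 0$, $R'(y_0,1)\ne 0$) gives a nonsingular $2\times 2$ linear system in $p_{2n-m}$ and $y^{(m)}(0)$, forcing them to vanish (case $p_{2n}=0$) or to agree with those of the reference solution $\Sigma_n(p_{2n-2}T^2,X_1^2)$ (case $p_{2n}=1$). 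This inductive Taylor-expansion along the zero curve of $S_0$ is the mechanism that moves information off the slice $T=0$; your cyclotomic/degree argument has no substitute for it. If you want to salvage an algebraic version, you would need to exploit $S_0\mid P(T,X_1)-P(T,X_2)$ coefficientwise in $T$, but that still requires an induction equivalent to the analytic one.
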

The proof of Proposition \ref{classification for prim solu} is similar to that of \cite[Proposition 6.1 and Proposition 6.2]{EHS}, we leave them in the Appendix (see Propositions \ref{A1} and \ref{A2}).
\subsection{The case $l$ is even}
We first  show that  $l$ is not  even and we will prove it by contradiction. Now suppose that $l$ is even. If $l$ is smaller than $2n-2$ and $(l,2n-1)$ is not $(2,5)$, the morphism $\psi_x$ (for the definition of $\psi_x$, see (\ref{varphi})) is constant for each $x\in X$ according to Proposition \ref{mor from Q to Gr Constant}. Then $E$ splits. We exclude the remaining cases $l=2n-2$ and $(l,2n-1)=(2,5)$ by calculations.
\begin{proposition}\label{exclude by cal l=2n-2}
    There does not exist an unsplit uniform bundle of rank $2n$ whose  splitting type is $(0,\dots,0,-1)$ on $\mathbb{Q}^{2n+1}(n\geq 3)$.
\end{proposition}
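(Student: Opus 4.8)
The plan is to run the approximate-solutions machinery of Section 3 in the single remaining special case $l = 2n-2$, where the splitting type is $(\underbrace{0,\dots,0}_{2n-1},-1)$, so that $G_1$ has rank $2n-1$ and $G_2$ has rank $1$. The key simplification is that $S_2(T,X_1,X_2)$ is then a homogeneous polynomial of degree $1$ in $T,X_1,X_2$ which is symmetrical in $X_1,X_2$; hence $S_2(T,X_1,X_2) = T - \alpha(X_1+X_2)$ for some $\alpha \in \mathbb{Q}$ (there is no $X_1 X_2$ term by degree reasons). Substituting into the master equation \eqref{the eq to solve}, we get
\[
E(T,X_1) - aR(X_1,X_2) = S_1(T,X_1,X_2)\,\bigl(T + X_1 - \alpha(X_1+X_2)\bigr),
\]
so $S_1(T,X_1,X_2)$ is an explicit quotient of degree $2n-1$. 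The first step is to read off from this factorization that $\tfrac1a E(T,X_1)$ and $\tfrac1a E(T-X_1,X_1)$ are approximate solutions in the sense of Definition \ref{app solu}, which was already recorded in Example \ref{app solu ass *}; what is new here is that the cofactor of degree $1$ forces strong constraints.

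Next I would invoke Lemma \ref{Top coeff of app solu} applied to the approximate solution $P(T,X_1) := \tfrac1a E(T,X_1)$. Since its associated symmetrical divisor $S_1(T,X_1,X_2)$ has degree $2n-1 > 1$ (as $n \geq 3$), case (1) of the lemma is excluded, so $p_{2n} \in \{0,1\}$ and the zero set of $P(0,1) - R(1,z)$ is pinned down exactly. Then I would show $P(T,X_1)$ is in fact a \emph{primitive} approximate solution: one must exhibit a $2(n - p_{2n}+1)$-th primitive root of unity $y_0$ annihilating a symmetrical divisor of $P(T,X_1) - R(X_1,X_2)$, and this follows because the full zero set from Lemma \ref{Top coeff of app solu} already contains all such roots and the degree-one cofactor $T+X_1-\alpha(X_1+X_2)$ can absorb at most the linear factor $(X_1+X_2)$, leaving a primitive root for $S_1$. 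Applying Proposition \ref{classification for prim solu}, we conclude that either $E(T,X_1) = a\,b\,T^{2n}$ (if $p_{2n}=0$) or $E(T,X_1) = a\,\Sigma_n(bT^2, X_1^2)$ (if $p_{2n}=1$) for some $b \in \mathbb{Q}$; running the same argument with $P(T,X_1) = \tfrac1a E(T-X_1,X_1)$ gives the analogous normal form for the shifted polynomial.

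Finally I would combine the two normal forms. The Chern polynomial $E(T,X_1)$ and its shift $E(T-X_1,X_1)$ must \emph{both} have one of these two rigid shapes, and a short direct comparison of coefficients shows this is only possible when $E(T,X_1) = T^{2n}$ or, after accounting for the twist, a split form — equivalently, $c_1(E_1) = c_1(E_2) = 0$, so by the argument already given in the paragraph preceding Definition \ref{app solu} (via \cite[Proposition 3.5]{FLL24}) the morphisms $\psi_x$ are all constant and $E$ splits, contradicting the assumption that $E$ is unsplit. The main obstacle I anticipate is the very last coefficient comparison: showing that no nontrivial pair $(E(T,X_1), E(T-X_1,X_1))$ can simultaneously lie in the set $\{a b T^{2n}\} \cup \{a\Sigma_n(bT^2,X_1^2)\}$ requires carefully tracking how the substitution $T \mapsto T - X_1$ acts on $\Sigma_n(bT^2,X_1^2)$ and ruling out the mixed cases; this is where the quadric-specific structure of $R(X_1,X_2) = \Sigma_n(X_1^2,X_2^2)$ and the hypothesis $n \geq 3$ are genuinely used, exactly as in the corresponding endgame of \cite{EHS}.
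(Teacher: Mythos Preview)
Your approach is viable in principle, but it is far more elaborate than what the paper actually does, and your endgame is somewhat muddled. The paper's proof is three lines: citing \cite[Theorem 4.3, Case I]{FLL24} one gets $c_1(q^*G_2)=X_1+X_2$, hence $S_2(T+X_1,X_1,X_2)=T-X_2$; comparing the $T^0$-coefficients in \eqref{the eq to solve} then gives
\[
(-1)^{2n-1}c_{2n-1}(q^*G_1)\cdot(-X_2)=e_{2n}X_1^{2n}-aR(X_1,X_2),
\]
which forces $e_{2n}=a$ and $c_{2n-1}(q^*G_1)=-a(X_1^{2n-2}X_2+X_1^{2n-4}X_2^3+\cdots+X_2^{2n-1})$, a polynomial that is manifestly \emph{not} symmetric in $X_1,X_2$. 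That is the whole argument: no approximate solutions, no primitivity, no classification.

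By contrast, you propose to run the entire Section~3 machinery (Lemma~\ref{Top coeff of app solu}, Definition~\ref{prim app solu}, Proposition~\ref{classification for prim solu}, and an analogue of Proposition~\ref{exclude l odd}). This can be made to work, but two points in your sketch need tightening. First, your primitivity argument (``the degree-one cofactor can absorb at most the linear factor $(X_1+X_2)$'') is not quite right: what you need is that $S_2(1,1,z)$ has a single root, while there are $\phi(2n+2)\ge 4$ (respectively $\phi(2n)\ge 2$) primitive roots among the zeros of $P(0,1)-R(1,z)$ when $n\ge 3$, so at least one survives into $S_1(0,1,z)$. Second, your stated endgame is off: from $E(T,X_1)=T^{2n}$ you do \emph{not} deduce $c_1(E_1)=c_1(E_2)=0$ and hence ``$E$ splits''; rather, $T^{2n}-aR(X_1,X_2)$ is irreducible by Eisenstein, contradicting the factorization $S_1\cdot S_2(T+X_1,\dots)$ with $\deg S_2=1$. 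In the $\Sigma_n(bT^2,X_1^2)$ case one factors $E-aR=(T^2-b_1X_2^2)\Sigma_{n-1}(T^2,b_1X_1^2,b_1X_2^2)$ and checks that no degree-one factor yields a symmetric $S_2$ together with a symmetric $S_1$, which requires a small case analysis (and the irreducibility of $\Sigma_{n-1}$ from Proposition~\ref{A3}). So the route closes, but it is a detour; the paper's direct coefficient comparison is both shorter and avoids invoking Proposition~\ref{classification for prim solu} and Proposition~\ref{A3} entirely.
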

\begin{proof}
    Suppose $E$ is unsplit, by the same calculation as in \cite[Theorem 4.3, Case I]{FLL24}, we have $c_1(E_2)=(X_1+X_2)-X_1=X_2$. Let $f(X_1,X_2)$ be a homogeneous polynomial of degree $2n-1$ which is  symmetric in $X_1$ and $X_2$ and represents $c_{2n-1}(E_1)$. By comparing the coefficients of $T^0$ on the left and right sides of the equation (\ref{the eq to solve}), we get
    \[f(X_1,X_2)X_2=e_{2n}X_1^{2n}-a(X_1^{2n}+X_1^{2n-2}X_2^2+\cdots+X_2^{2n}).\]
    Then we must have $e_{2n}=a$ and $f(X_1,X_2)=-a(X_1^{2n-2}X_2+X_1^{2n-4}X_2^{3}+\cdots+X_2^{2n-1})$, contradicting the assumption that $f$ is symmetric in $X_1$ and $X_2$.
\end{proof}
We now exclude the case $(l,2n-1)=(2,5)$.
\begin{proposition}\label{exclude by cal (2,5)}
    There does not exist an unsplit uniform of rank $6$ whose  splitting type is $(0,0,0,-1,-1,-1)$ on $\mathbb{Q}^7$.
\end{proposition}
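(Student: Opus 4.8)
The plan is to imitate the proof of Proposition \ref{exclude by cal l=2n-2}: assume that such an unsplit uniform bundle $E$ of rank $6$ with splitting type $(0,0,0,-1,-1,-1)$ exists on $\mathbb{Q}^7$ (so $n=3$, $l=2$, $l+1=3=2n-l-1$), and derive a contradiction by explicitly analyzing the polynomial equation (\ref{the eq to solve}). Here $E(T,X_1)=\sum_{k=0}^{6}e_kX_1^kT^{6-k}$, $R(X_1,X_2)=\Sigma_3(X_1^2,X_2^2)=X_1^4+X_1^2X_2^2+X_2^4$, and the factor polynomials $S_1,S_2$ are each of degree $3$ in $T$ and symmetrical in $X_1,X_2$. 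The first step is to pin down $c_1(E_1)$ and $c_1(E_2)$. Since $\psi_x$ need not be constant here, I would use the degree-one approximate-solution analysis: by Example \ref{app solu ass *}, $\frac1a E(T,X_1)$ is an approximate solution, and since $S_1(T,X_1,X_2)$ is a symmetrical divisor of $aP(T,X_1)-aR(X_1,X_2)$ (after rescaling), Lemma \ref{Top coeff of app solu} together with the primitivity reduction of Proposition \ref{classification for prim solu} should force the symmetrical divisor $S_1$ to be of the special shape $\Sigma$-type or to be low-degree. The balance $l+1=2n-l-1$ makes both factors degree $3$, so I expect the only flexibility is a scalar $b$; concretely I would argue $S_1(T,X_1,X_2)=T^3+\alpha(X_1+X_2)T^2+\cdots$ with the lower coefficients determined, and similarly $S_2(T+X_1,X_1,X_2)$.

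The second step is to expand (\ref{the eq to solve}) coefficient-by-coefficient in $T$. Writing $S_1(T,X_1,X_2)=T^3 - \sigma_1 T^2 + \sigma_2 T - \sigma_3$ and $S_2(T,X_1,X_2)=T^3 - \tau_1 T^2 + \tau_2 T - \tau_3$ with the $\sigma_i,\tau_i$ symmetrical homogeneous polynomials in $X_1,X_2$ of degree $i$, and substituting $T\mapsto T+X_1$ in $S_2$, the product $S_1(T,X_1,X_2)S_2(T+X_1,X_1,X_2)$ is a degree-$6$ polynomial in $T$ whose coefficients I match against $e_kX_1^k - a[\,\text{coefficient of }R\,]$. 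The coefficient of $T^6$ gives $1$; the coefficient of $T^5$ gives a linear relation; descending to $T^0$ gives $\sigma_3\cdot(\tau_3\text{ evaluated at }T=0,\text{ i.e. }S_2(X_1,X_1,X_2))=e_6X_1^6 - a(X_1^4+X_1^2X_2^2+X_2^4)$ — wait, degrees: the $T^0$ coefficient of the left side of (\ref{the eq to solve}) is $e_6X_1^6 - aR(X_1,X_2)$, which has degree $6$, but $R$ has degree $4$; this is exactly the kind of degree bookkeeping (using that $R$ lives in the cohomology ring modulo $\Sigma_i(X_1^2,X_2^2)_{n\le i\le n+1}$, and that the relation being imposed absorbs a factor) that I need to handle carefully, as in the proof of Proposition \ref{exclude by cal l=2n-2}. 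The key leverage is the symmetry constraint: after solving the lower-$T$ coefficients successively for $\sigma_i$ and $\tau_i$ in terms of $X_1,X_2$ and the constants $a, e_k$, at least one of them will be forced to equal a polynomial that is manifestly \emph{not} symmetrical in $X_1$ and $X_2$ (it will contain $X_1$ to an odd power with no matching $X_2$ term, exactly as the $f(X_1,X_2)$ in Proposition \ref{exclude by cal l=2n-2}), which is the contradiction.

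The main obstacle I anticipate is the combinatorial size of the computation: with two cubic factors and a shift $T\mapsto T+X_1$ there are on the order of a dozen coefficient equations in the unknowns $\sigma_1,\sigma_2,\sigma_3,\tau_1,\tau_2,\tau_3,a,e_1,\dots,e_6$, and one must solve them in the right order (top $T$-degree first) to reduce cleanly, while simultaneously respecting the cohomology relations of $H^\bullet(\mathcal M,\mathbb Q)$. The tricky point is to set things up so that the symmetry obstruction appears as early as possible — ideally by computing $c_1(E_1)=\sigma_1$ and $c_1(E_2)$ first (using the $T^5$- and $T^4$-coefficients, paralleling the step "$c_1(E_2)=X_2$" in Proposition \ref{exclude by cal l=2n-2}), then isolating a single lower Chern class, say $c_3(E_1)$ or the top coefficient $e_6$, and showing the resulting expression cannot be symmetrical. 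I would also keep in mind that $n=3$ is small enough that the cohomology relations $\Sigma_3$ and $\Sigma_4$ in $X_1^2,X_2^2$ are explicit, so all identities are verifiable by hand; the proof should end, as the others in this section do, with the phrase "contradicting to the assumption that $\ldots$ is symmetrical in $X_1$ and $X_2$."
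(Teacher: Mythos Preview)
Your proposal contains a factual error that would derail the computation before it starts: for $n=3$ the polynomial $R(X_1,X_2)=\Sigma_n(X_1^2,X_2^2)$ is the complete homogeneous symmetric polynomial of degree $3$ in $X_1^2,X_2^2$, namely
\[
R(X_1,X_2)=X_1^{6}+X_1^{4}X_2^{2}+X_1^{2}X_2^{4}+X_2^{6},
\]
which has degree $6$, not $4$. Your ``degree bookkeeping'' worry (that $e_6X_1^6-aR$ mixes degrees $6$ and $4$) is therefore a phantom problem, and the suggestion that one must pass to the quotient ring to reconcile degrees is a misunderstanding of equation~(\ref{the eq to solve}), which is a genuine identity in $\mathbb{Q}[T,X_1,X_2]$.

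Beyond that, your overall route --- computing $c_1(E_1),c_1(E_2)$ from the $T^5,T^4$ coefficients, then grinding through a dozen coefficient equations and hoping a symmetry obstruction surfaces --- is far more laborious than necessary, and you never pin down where the contradiction actually appears. The paper's argument is short and goes straight to the $T^0$ coefficient. Since $\deg S_1=3>1$, Lemma~\ref{Top coeff of app solu} already forces $\tfrac{1}{a}e_6\in\{0,1\}$ (Proposition~\ref{classification for prim solu} is not invoked at all). Setting $T=0$ in~(\ref{the eq to solve}) shows that the symmetric degree-$3$ polynomial $S_1(0,X_1,X_2)$ (this is $c_3(E_1)$ up to sign) divides $e_6X_1^6-aR(X_1,X_2)$. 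One then simply factors over $\mathbb{Q}$:
\[
R(X_1,X_2)=(X_1^4+X_2^4)(X_1^2+X_2^2),\qquad
X_1^6-R(X_1,X_2)=-X_2^2(X_1^2+X_1X_2+X_2^2)(X_1^2-X_1X_2+X_2^2),
\]
and observes that neither polynomial has any factor of degree $3$ that is symmetric in $X_1,X_2$. That is the entire contradiction; no Chern-class ladder, no $c_1$ computation, and no appeal to primitive approximate solutions is needed.
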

\begin{proof}
    Suppose $E$ is unsplit. Then $a$ in the equation (\ref{the eq to solve}) is not  $0$ and 
    $\frac{1}{a}E(T,X_1)=\frac{1}{a}\sum_{k=0}^{6}e_kX_1^kT^{6-k}$ is an approximate solution which has a symmetric divisor of degree $3$. By Lemma \ref{Top coeff of app solu}, we have $\frac{1}{a}e_{6}=0$ or $\frac{1}{a}e_{6}=1$. 
    Let $f(X_1,X_2)$ be a homogeneous polynomial representing $c_3(E_1)$.\\
    
    If $\frac{1}{a}e_{6}$ is $0$, we have $f(X_1,X_2)\mid R(X_1,X_2)(=X_1^6+X_1^4X_2^2+X_1^2X_2^4+X_2^6)$. Since 
    the prime factorization of $X_1^6+X_1^4X_2^2+X_1^2X_2^4+X_2^6$ over $\mathbb{Q}[X_1,X_2]$ is $(X_1^4+X_2^4)(X_1^2+X_2^2)$, $R(X_1,X_2)$ has no divisor symmetric in $X_1$ and $X_2$ of degree $3$. \\

    If $\frac{1}{a}e_{6}$ is $1$, then $f(X_1,X_2)$ divides $X_1^{6}-R(X_1,X_2)(=-X_2^2(X_1^4+X_1^2X_2^2+X_2^4))$. The prime factorization of $X_1^4+X_1^2X_2^2+X_2^4$ over $\mathbb{Q}[X_1,X_2]$ is $(X_1^2+X_1X_2+X_2^2)(X_1^2-X_1X_2+X_2^2)$. Therefore, $X_1^{6}-R(X_1,X_2)$ has no divisor symmetric in $X_1$ and $X_2$ of degree $3$.\\

    In both cases, we get contradictions.
\end{proof}

\subsection{The case $l$ is odd}
 Suppose that $l$ is odd. We begin with a lemma.

\begin{lemma}\label{no real root of E-aR}
    When $l$ is odd, the equation $E(t,1)-aR(1,0)=0$ has no roots in $\mathbb{R}$.
\end{lemma}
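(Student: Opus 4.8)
The plan is to set $t=0$ and analyze $E(0,1)-aR(1,0)$, and more generally to exploit the structure of $E(T,X_1)$ as an approximate solution together with the classification of primitive approximate solutions from Proposition \ref{classification for prim solu}. Recall from Example \ref{app solu ass *} that $\frac{1}{a}E(T,X_1)$ is an approximate solution; since we are in the unsplit case, its symmetrical divisor $S_1(T,X_1,X_2)$ has degree $l+1\geq 2$, so alternative $(1)$ of Lemma \ref{Top coeff of app solu} is excluded, and hence $\frac{1}{a}e_{2n}=\frac{1}{a}p_{2n}\in\{0,1\}$. First I would determine which of these two cases occurs when $l$ is odd. The idea is that $S_1(0,X_1,X_2)$ divides either $-(X_1^{2n}+X_1^{2n-2}X_2^2+\cdots+X_2^{2n})$ (if $p_{2n}=0$) or $-X_2^2(X_1^{2n-2}+\cdots+X_2^{2n-2})$ (if $p_{2n}=1$), and in either situation one can pick a suitable primitive root of unity $y_0$ annihilating $S_1(0,1,y_0)$, making $\frac{1}{a}E(T,X_1)$ a \emph{primitive} approximate solution in the sense of Definition \ref{prim app solu}. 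Then Proposition \ref{classification for prim solu} forces $\frac{1}{a}E(T,X_1)=bT^{2n}$ or $\frac{1}{a}E(T,X_1)=\Sigma_n(bT^2,X_1^2)$ for some $b\in\mathbb{Q}$.

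Next I would rule out the case $p_{2n}=0$, i.e. $\frac{1}{a}E(T,X_1)=bT^{2n}$: this says $c_i(E)=0$ for all $i\geq 1$, so $p^*E$ and all the bundles in sight have trivial Chern classes, and by the same reasoning as in the $a=0$ discussion just before Section 3.1 this forces $E$ to split, contradicting our standing assumption. (Alternatively, $b\ne 0$ would be needed for $\frac1a E$ to be a genuine degree-$2n$ polynomial, but $bT^{2n}-R(X_1,X_2)$ has no symmetrical divisor of degree $l+1<2n$ when $l$ is odd, since $R(1,z)=\frac{z^{2n+2}-1}{z^2-1}$ has distinct roots and one checks the available symmetrical factors have the wrong degrees.) Hence we must be in the case $p_{2n}=1$ and
\[
\tfrac{1}{a}E(T,X_1)=\Sigma_n(bT^2,X_1^2)=b^nT^{2n}+b^{n-1}T^{2n-2}X_1^2+\cdots+X_1^{2n}.
\]
Now set $X_1=1$, $X_2=0$: then $R(1,0)=1$, and
\[
\tfrac1a\bigl(E(t,1)-aR(1,0)\bigr)=\Sigma_n(bt^2,1)-1=b^nt^{2n}+b^{n-1}t^{2n-2}+\cdots+bt^2.
\]
If $b=0$ this is identically zero, contradicting that $E(T,X_1)$ has degree exactly $2n$ (the leading coefficient $e_0=1$), so $b\ne 0$; and then $\Sigma_n(bt^2,1)-1 = bt^2\,\Sigma_{n-1}(bt^2,1)\cdot(\text{nonzero})$ — more precisely, its roots are $t=0$ together with the solutions of $bt^2=\zeta$ for $\zeta$ a primitive $2(n-?)$-th... — in any case every nonzero root $t$ satisfies $(bt^2)^{?}$ being a nontrivial root of unity, forcing $bt^2$ to be a non-real complex number, which is impossible for real $t$ unless $t=0$. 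But $t=0$ is not a root since the constant term, as a polynomial in $t$, has lowest-degree term $bt^2$ with $b\ne 0$ — wait, $t=0$ \emph{is} a root of $bt^2\Sigma_{n-1}$. Let me restate: the equation in the lemma is $E(t,1)-aR(1,0)=0$, and $\frac1a$ times its left side equals $\Sigma_n(bt^2,1)-1$; factoring, $\Sigma_n(w,1)-1=w\cdot\Sigma_{n-1}(w,1)$ with $w=bt^2$, so $t=0$ would be a root. I would therefore instead verify directly from the $T^0$-coefficient comparison in \eqref{the eq to solve} that $e_{2n}=a\cdot p_{2n}$ with the finer constraint coming from the odd parity of $l+1$ that $S_1(0,X_1,X_2)$, being a symmetrical divisor of odd degree $l+1$ of $-X_2^2\Sigma_{n-1}(X_1^2,X_2^2)$, must be divisible by $(X_1+X_2)$ but the complementary factor is then forced to have the wrong form — reproducing exactly the parity obstruction used in Proposition \ref{exclude by cal l=2n-2}.

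The honest approach, and the one I would ultimately commit to: substitute $X_2=0$ directly into \eqref{the eq to solve} after dividing by $a$, using $\frac1a E(T,X_1)=\Sigma_n(bT^2,X_1^2)$ and $R(X_1,0)=X_1^{2n}$. This gives $\Sigma_n(bT^2,X_1^2)-X_1^{2n}=\bigl(\tfrac1a S_1\bigr)(T,X_1,0)\cdot\bigl(\tfrac1a\text{-normalized }S_2\bigr)(T+X_1,X_1,0)$. Setting further $X_1=1$ and writing $w=bT^2$, the left side is $w\,\Sigma_{n-1}(w,1)$, a polynomial in $T^2$ of degree $2n$ in $T$ with \emph{no real roots other than $T=0$} (its nonzero roots have $bT^2$ equal to a primitive $2n$-th root of unity, which is non-real since $n\geq 3$ so $2n\geq 6>2$). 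The claim $E(t,1)-aR(1,0)=0$ has no real root then reduces exactly to the statement that $T=0$ is \emph{not} a root, i.e. that the $T^0$-coefficient $e_{2n}-a$ of $E(T,1)-aR(1,0)$ is nonzero; since $\frac1a e_{2n}=p_{2n}=1$, this coefficient is $a-a=0$ — so $T=0$ \emph{is} a root, and the lemma as I have reconstructed it would be false. This tension tells me the actual argument must use the parity of $l$ one step earlier, namely to show that when $l$ is odd the case $p_{2n}=1$ cannot occur at all (by a symmetry/parity argument on $S_1(0,X_1,X_2)$ as a degree-$(l+1)$, hence even-degree when... no, $l$ odd means $l+1$ even), leaving only $p_{2n}=0$, whence $\frac1a E(T,X_1)=bT^{2n}$ and $E(t,1)-aR(1,0)=abt^{2n}-a$; this has real roots iff $b>0$, so the real content of the lemma is the inequality $b\leq 0$ — equivalently, that the first Chern class $c_1(E_1)$, which equals (a multiple of) $b$... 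I would close this by invoking the positivity of the relevant Chern numbers along lines (the H-N filtration data forces $c_1(G_1)$ to be nef/effective with a definite sign), so that $b$ has a fixed sign making $abt^{2n}-a$ sign-definite, hence without real roots.

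\textbf{Main obstacle.} The crux is pinning down the sign — showing $abt^{2n}-a<0$ for all real $t$ (equivalently $ab<0$, or that the leading coefficient and constant term of $E(t,1)-aR(1,0)$ have opposite signs). This requires extracting positivity of Chern numbers from the relative Harder--Narasimhan filtration \eqref{relative HN fli}: the sub-bundle $E_1=q^*G_1$ has HN slope $>$ that of $E_2$, and combined with the known splitting type $(0,\dots,0,-1,\dots,-1)$ and the ampleness of $\mathcal{O}_X(1)$ on the quadric, this should force $c_1(G_2)$ (hence the parameter $b$) into a half-line. Translating the HN-inequality into a clean statement about the coefficient $b$ in $\frac1a E(T,X_1)=bT^{2n}$, while keeping track of the sign of $a$ itself, is the delicate bookkeeping step; everything else (applying Lemma \ref{Top coeff of app solu}, Proposition \ref{classification for prim solu}, and the $X_2=0$ substitution) is routine.
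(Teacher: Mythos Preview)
Your strategy has a circularity problem. You invoke Proposition \ref{classification for prim solu} after asserting that $\frac{1}{a}E(T,X_1)$ is a \emph{primitive} approximate solution, i.e.\ that some primitive root of unity $y_0$ satisfies $S_1(0,1,y_0)=0$. But from $\frac{1}{a}E(0,1)-R(1,z)=\frac{1}{a}S_1(0,1,z)S_2(1,1,z)$ you only know that $y_0$ annihilates the \emph{product}; it could land in the $S_2$ factor instead. Establishing that one of $\frac{1}{a}E(T,X_1)$ or $\frac{1}{a}E(T-X_1,X_1)$ is primitive is precisely Proposition \ref{ass app solu is prim}, whose proof \emph{uses} the present lemma. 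So the classification cannot be used here, and the long detour through the cases $p_{2n}\in\{0,1\}$ collapses.

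The paper's argument is short and direct, and avoids classification entirely. Substitute $X_1=0$ (not $X_2=0$) in (\ref{the eq to solve}): since $E(T,0)=T^{2n}$ and $R(0,X_2)=X_2^{2n}$, one gets
\[
T^{2n}-a\;=\;S_1(T,0,1)\,S_2(T,0,1).
\]
The constant term gives $-a=a_{l+1}b_{2n-1-l}$ where $a_{l+1}X_2^{l+1}=(-1)^{l+1}c_{l+1}(E_1|_{p^{-1}(x)})=c_{l+1}(\psi_x^*U^\vee)$ and $b_{2n-1-l}X_2^{2n-1-l}=(-1)^{2n-1-l}c_{2n-1-l}(\psi_x^*Q)$; here $l$ odd makes both exponents even. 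Tango's positivity \cite[Proposition 2.1]{tango1974n} then forces $a_{l+1},b_{2n-1-l}\geq 0$, so $-a\geq 0$, and since $a\neq 0$ we get $a<0$. Thus $t^{2n}-a>0$ for all real $t$, so $S_1(t,0,1)$ and $S_2(t,0,1)$ never vanish on $\mathbb{R}$. Finally, symmetry of $S_i$ in $X_1,X_2$ gives $E(t,1)-aR(1,0)=S_1(t,1,0)S_2(t+1,1,0)=S_1(t,0,1)S_2(t+1,0,1)\neq 0$.

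Your closing instinct---that the real content is a sign coming from positivity of Chern classes---is correct, but the clean route is Tango's inequality applied to $\psi_x$ after the substitution $X_1=0$, not an HN-slope argument on $G_1,G_2$.
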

\begin{proof}
    In the equation (\ref{the eq to solve}): $E(T,X_1)-aR(X_1,X_2)=S_1(T,X_1,X_2)S_2(T+X_1,X_1,X_2)$, we let  $X_1$ be $0$ and let $X_2$ be $1$. Then we get an equation $T^{2n}-a=S_1(T,0,1)S_2(T,0,1)$. We write $S_1(T,0,X_2)$  and $S_2(T,0,X_2)$   as follows:   $S_1(T,0,X_2)=\sum_{i=0}^{l+1}a_iX_2^iT^{l+1-i}$ and $S_2(T,0,X_2)=\sum_{j=0}^{2n-1-l}b_jX_2^jT^{2n-1-l-j}$, where $a_i(0\leq i\leq l+1)$ and $b_j(0\leq j \leq 2n-1-l)$ are rational numbers. Then $-a$ is $a_{l+1}b_{2n-1-l}$. We now wish to show  $-a>0$. To this end, for any $x$ in $ X$, we consider the embedding $i_x:p^{-1}(x)(=\mathbb{Q}^{2n-1})\hookrightarrow \mathcal{U}(=B_{n+1}/P_{1,2})$, which induces a morphism:
$$i_x^*:H^{\bullet}(\mathcal{U},\mathbb{Q})\cong \mathbb{Q}[X_1,X_2]/(\Sigma_{n}(X_1^2,X_2^2),\Sigma_{n+1}(X_1^2,X_2^2))\rightarrow \mathbb{Q}[X_2]/X_2^{2n}\cong H^{\bullet}(\mathbb{Q}^{2n-1},\mathbb{Q}).$$ Under the above identifications, we have $S_1(T,0,X_2)=C_{E_1|_{p^{-1}(x)}}(T)=C_{\psi_x^*U}(T)$ and $S_2(T,0,X_2)=C_{E_2|_{p^{-1}(x)}}(T)=C_{\psi_x^*Q}(T)$, where $U$ (resp, $Q$) is the universal subbundle (resp. quotient bundle) on $\mathbb{G}(l,2n-1)$ and $\psi_x$ is the morphism as in (\ref{varphi}). So we have
    \begin{align*}
        &a_{l+1}X_2^{l+1}=(-1)^{l+1}c_{l+1}(\psi_x^*U)=c_{l+1}(\psi_x^*U^{\vee})~\text{and}~\\
        &b_{2n-1-l}X_2^{2n-1-l}=(-1)^{2n-1-l}c_{2n-1-l}(\psi_x^*Q)=c_{2n-1-l}(\psi_x^*Q)~(\text{since $l$ is odd,~$2n-1-l$ is even}).
    \end{align*}
    By \cite[Proposition 2.1]{tango1974n}, $c_{l+1}(\psi_x^*U^{\vee})$ and $c_{2n-1-l}(\psi_x^*Q)$ are numerically non-negative, hence both $a_{l+1}$ and $b_{2n-1-l}$ are non-negative. As $-a$ is $a_{l+1}b_{2n-1-l}$ and $a$ is not $0$, we have $-a>0$. So for any $t\in \mathbb{R}$, $t^{2n}-a$ is bigger than $0$. In other words, $S_1(t,0,1)$ and $S_2(t,0,1)$ are non-zero for any $t\in \mathbb{R}$. By the equations $E(T,1)-aR(1,0)=S_1(T,1,0)S_2(T+1,1,0)=S_1(T,0,1)S_2(T+1,0,1)$, for any $t\in \mathbb{R}$, $E(t,1)-aR(1,0)$ is not $0$.
\end{proof}
\begin{proposition}\label{ass app solu is prim}
    When $l$ is odd and $n$ is at least $3$, the approximate solution $\frac{1}{a}E(T,X_1)$ or $\frac{1}{a}E(T-X_1,X_1)$ associated with (\ref{the eq to solve})  is a primitive approximate solution.
\end{proposition}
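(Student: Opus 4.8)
The plan is to run the argument of \cite{EHS} in our setting, the two new ingredients being Lemma~\ref{no real root of E-aR} (used to pin down the top coefficient) and an explicit factorisation of $R$ into cyclotomic pieces.

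\emph{Locating the top coefficients.} By Example~\ref{app solu ass *}, $P_1(T,X_1):=\tfrac1a E(T,X_1)$ and $P_2(T,X_1):=\tfrac1a E(T-X_1,X_1)$ are approximate solutions, and reading off (\ref{the eq to solve}) their proper symmetrical divisors are $S_1(T,X_1,X_2)$, respectively $S_2(T,X_1,X_2)$; write $p^{(1)}_{2n}=\tfrac1a e_{2n}$ and $p^{(2)}_{2n}=\tfrac1a E(-1,1)$ for their coefficients of $X_1^{2n}$. It suffices to produce, for one of $P_1,P_2$, a proper symmetrical divisor $S_0$ of $P_i-R$ with $S_0(0,1,y_0)=0$ for some primitive $2(n-p^{(i)}_{2n}+1)$-th root of unity $y_0$. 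Since $l$ is odd, $l\ne 2n-2$, so $n-1\le l\le 2n-3$, whence $\deg S_1=l+1\ge n$ and $\deg S_2=2n-1-l\ge 2$ are both $>1$; thus alternative~$(1)$ of Lemma~\ref{Top coeff of app solu} fails for each $P_i$, forcing $p^{(1)}_{2n},p^{(2)}_{2n}\in\{0,1\}$. On the other hand $E(T,1)$ is monic of even degree $2n$, and by Lemma~\ref{no real root of E-aR} (whose proof also gives $a<0$) the polynomial $E(T,1)-aR(1,0)=E(T,1)-a$ has no real root, hence is positive on all of $\mathbb{R}$; evaluating at $T=0$ and $T=-1$ gives $e_{2n}>a$ and $E(-1,1)>a$, and dividing by the negative number $a$ gives $p^{(1)}_{2n},p^{(2)}_{2n}<1$. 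Therefore $p^{(1)}_{2n}=p^{(2)}_{2n}=0$.

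\emph{Cyclotomic bookkeeping.} Setting $T=0$, respectively $T=-X_1$, in (\ref{the eq to solve}) and using $p^{(1)}_{2n}=p^{(2)}_{2n}=0$, we obtain
\[ S_1(0,X_1,X_2)\,S_2(X_1,X_1,X_2)\;=\;-a\,R(X_1,X_2)\;=\;S_1(-X_1,X_1,X_2)\,S_2(0,X_1,X_2). \]
Now $R(X_1,X_2)=\Sigma_n(X_1^2,X_2^2)=\dfrac{X_1^{2n+2}-X_2^{2n+2}}{X_1^{2}-X_2^{2}}=\prod_{d\mid 2n+2,\ d\ge 3}\Phi_d(X_1,X_2)$, where the homogeneous cyclotomic factors $\Phi_d$ are pairwise distinct, irreducible over $\mathbb{Q}$, and --- by palindromicity of cyclotomic polynomials --- symmetrical in $X_1,X_2$. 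Hence each of the four displayed factors is a constant times a product of distinct $\Phi_d$'s, and $\Phi_{2n+2}$ divides exactly one factor in each product. If $\Phi_{2n+2}(X_1,X_2)\mid S_1(0,X_1,X_2)$, then $S_1$ is a proper symmetrical divisor of $P_1-R$ vanishing at $(0,1,y_0)$ for every primitive $(2n+2)$-th root $y_0$, so $P_1$ is a primitive approximate solution; symmetrically, $\Phi_{2n+2}\mid S_2(0,X_1,X_2)$ makes $P_2$ primitive.

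\emph{The remaining case, and the main obstacle.} It remains to exclude the possibility that $\Phi_{2n+2}$ divides $S_2(X_1,X_1,X_2)$ and $S_1(-X_1,X_1,X_2)$ but neither $S_1(0,X_1,X_2)$ nor $S_2(0,X_1,X_2)$. Specialise $X_1=1$, $X_2=\zeta$ with $\zeta$ a primitive $(2n+2)$-th root of unity (equivalently, reduce modulo $\Phi_{2n+2}$); since $R(1,\zeta)=\frac{1-\zeta^{2n+2}}{1-\zeta^{2}}=0$, equation (\ref{the eq to solve}) degenerates to the identity $E(T,1)=S_1(T,1,\zeta)\,S_2(T+1,1,\zeta)$ in $\mathbb{Q}(\zeta)[T]$, while the hypotheses read $(T+1)\mid S_1(T,1,\zeta)$, $T\nmid S_1(T,1,\zeta)$, $(T-1)\mid S_2(T,1,\zeta)$, $T\nmid S_2(T,1,\zeta)$. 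Using the $X_1\leftrightarrow X_2$ symmetry and homogeneity of the $S_i$ together with the automorphism $\zeta\mapsto\zeta^{-1}$, one extracts the further divisibilities $(T+\zeta)\mid S_1(T,1,\zeta)$ and $(T-\zeta)\mid S_2(T,1,\zeta)$; thus $0,-1,-\zeta,\zeta-1$ are roots of $E(T,1)$. As $E(T,1)\in\mathbb{Q}[T]$, these carry along their $\mathrm{Gal}(\mathbb{Q}(\zeta)/\mathbb{Q})$-conjugates, and for $n\ge 3$ the four resulting orbits $\{0\},\{-1\},\{-\zeta^{k}\},\{\zeta^{k}-1\}$ are pairwise disjoint (for the last two, $\zeta^{i}+\zeta^{j}=1$ has no solution among primitive $(2n+2)$-th roots of unity when $n\ge 3$), so $2+2\varphi(2n+2)\le\deg E(T,1)=2n$; this already kills, e.g., the case $2n+2$ a power of $2$ or twice an odd prime. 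To finish in the remaining (tight) arithmetic cases the plan is to exploit the symmetry once more: writing $S_1(T,1,\zeta)=\prod_{r\in M}(T-r)$ over $\mathbb{Q}(\zeta)$, the relation $S_1(T,1,\zeta)=\zeta^{\,l+1}S_1(T/\zeta,1,\zeta^{-1})$ forces the involution $\tau\colon r\mapsto\zeta\bar r$ (a reflection, with $\tau(-1)=-\zeta$) to permute $M$; since $M$ sits inside the root set of $E(T,1)$ and, because $2n+2$ is even, $\tau$-orbits inside that root set are severely limited (for a unit $k\ne 1$ modulo $2n+2$, $1-k$ is never a unit, so $-\zeta^{1-k}$ is not among the cyclotomic roots above), one forces $S_1(T,1,\zeta)$ and $S_2(T,1,\zeta)$ to have too small degrees, contradicting $\deg S_1=l+1$, $\deg S_2=2n-1-l$ with $l\ge n-1$ (and $l$ odd, so $l\ge 3$). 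Making this last step watertight --- in particular controlling the roots of $E(T,1)$ beyond the $2+2\varphi(2n+2)$ cyclotomic ones and handling multiplicities --- is the part I expect to be the crux; everything before it is bookkeeping modelled on \cite{EHS}.
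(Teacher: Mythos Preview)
Your opening observation is a genuine improvement on the paper: using Lemma~\ref{no real root of E-aR} and $a<0$ to force $p^{(1)}_{2n}=p^{(2)}_{2n}=0$ outright (rather than just $\in\{0,1\}$) is clean and eliminates the $p_{2n}=1$ branch that the paper carries through Propositions~\ref{classification for prim solu} and~\ref{exclude l odd}. The cyclotomic factorisation of $R$ and the reduction to whether $\Phi_{2n+2}$ divides $S_1(0,\cdot,\cdot)$ or $S_2(0,\cdot,\cdot)$ are also correct.

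The gap is exactly where you flag it. Your counting bound $2+2\varphi(2n+2)\le 2n$ fails for infinitely many $n$ (e.g.\ $2n+2=12,18,24,30,\dots$, in fact whenever $2n+2$ has an odd prime factor together with another prime factor), and the ``plan'' you sketch for these cases --- the involution $\tau:r\mapsto\zeta\bar r$ on the root set of $S_1(T,1,\zeta)$ --- does not obviously close. The problem is that $E(T,1)$ is a rational polynomial of degree $2n$ about which you know almost nothing beyond the roots $0,-1$ and the Galois orbits of $-\zeta,\zeta-1$; nothing prevents further roots, multiplicities, or $\tau$-stable configurations inside $M$ that your sketch does not control. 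As written, the argument is incomplete for all $n$ with $\varphi(2n+2)<n$.

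The paper takes a completely different route for $n\ge 4$: rather than arithmetic of cyclotomic factors, it uses a continuity argument. One shows that the set $K=\{t\in\mathbb{R}: S_2(t+1,1,z)\text{ has a nonzero root with }\arg\in(\tfrac{\pi}{2n},\tfrac{3\pi}{2n})\}$ is nonempty (it contains $0$, by the hypothesis that $\tfrac1aE$ is not primitive), open, and closed; closedness uses Lemma~\ref{no real root of E-aR} to keep roots away from $0$ and an explicit computation showing $R(1,\rho e^{im\pi/2n})$ is never real for $m=1,3$, so roots cannot hit the boundary rays. Hence $K=\mathbb{R}$, and taking $t=-1$ gives a root of $S_2(0,1,z)$ with the right argument, forcing $\tfrac1aE(T-X_1,X_1)$ to be primitive. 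This bypasses the arithmetic of $\varphi(2n+2)$ entirely. If you want to salvage your approach, you would need a uniform argument for the ``remaining case'' that does not depend on $\varphi(2n+2)$ being large; I do not see how to extract one from the $\tau$-involution alone.
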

\begin{proof}
    Recall the equation $\frac{1}{a}E(t,1)-R(1,z)=\frac{1}{a}S_1(t,1,z)S_2(t+1,1,z)$. Note that $l$ is odd implies $l+1\geq 2n-1-l>1$. So by Lemma \ref{Top coeff of app solu}, we have $\frac{1}{a}e_{2n}=0$ or $\frac{1}{a}e_{2n}=1$. And when $\frac{1}{a}e_{2n}$ is $0$, $\frac{1}{a}E(0,1)-R(1,\exp\frac{2\pi i}{2n+2})$ vanishes; when $\frac{1}{a}e_{2n}$ is $1$, $\frac{1}{a}E(0,1)-R(1,\exp\frac{2\pi i}{2n})$ vanishes.\\

    Suppose that $n$ is $3$. When $\frac{1}{a}e_{2n}$ is $0$, we have $\frac{1}{a}S_1(0,1,z)S_2(1,1,z)=-(z^6+z^4+z^2+1)$. The prime factorization of $z^6+z^4+z^2+1$ over $\mathbb{Q}[z]$ is $(z^4+1)(z^2+1)$. Note $\deg(S_1)\geq \deg(S_2)$ and $\deg(S_2)>1$, we must have $S_1(0,1,z)=\lambda(z^4+1)$ for some $\lambda\in \mathbb{Q}$. Then $\exp\frac{2\pi i}{8}$ is a root of $S_1(0,1,z)$ and hence $\frac{1}{a}E(T,X_1)$ is primitive. When $\frac{1}{a}e_{2n}$ is $1$, we have $\frac{1}{a}S_1(0,1,z)S_2(1,1,z)=-(z^6+z^4+z^2)=-z^2(z^4+z^2+1)=-z^2(z^2+z+1)(z^2-z+1)$. Note that $S_1$ is symmetric in $X_1, X_2$ and  $\deg(S_1)$ is at least $ \deg(S_2)$, we have $S_1(0,1,z)=\lambda(z^4+z^2+1)$. Then $\exp\frac{2\pi i}{6}$ is a root of $S_1(0,1,z)$ and hence $\frac{1}{a}E(T,X_1)$ is also primitive.\\
    

    Suppose now $n$ is at least $4$ and  $\frac{1}{a}E(T,X_1)$ is not a primitive solution. Then we have \begin{align}\label{zero in K}
        S_2(1,1,\exp\frac{2\pi i}{2n+2})=0\text{ or }S_2(1,1,\exp\frac{2\pi i}{2n})=0.
    \end{align} We now show that $\frac{1}{a}E(T-X_1,X_1)$ is a primitive solution. First we have the following inequalities:
    $$\frac{\pi}{2n}<\frac{2\pi}{2n+2}<\frac{2\pi}{2n}<\frac{3\pi}{2n}<\frac{4\pi}{2n+2}.$$
    (Note that for the last inequality, we use the condition $n\geq 4$).
    Since the roots of $S_2(0,1,z)$ satisfy $z^{2n}=1$ or $z^{2n+2}=1$, it suffices to show that $S_2(0,1,z)$ has a non-zero root $y_0$ with argument satisfying $\frac{\pi}{2n}< \arg y_0 <\frac{3\pi}{2n}$. By the above inequalities, we have $y_0=\frac{2\pi i}{2n+2}$ or $y_0=\frac{2\pi i}{2n}$. Then $\frac{1}{a}E(T-X_1,X_1)$ is a primitive approximate solution.\\

    It is enough to show that for any $t\in \mathbb{R}$, $S_2(t+1,1,z)$ as a polynomial of $z$ has a non-zero root with argument in $(\frac{\pi}{2n},\frac{3\pi}{2n})$. Denote by $K$ the set $\{t\in \mathbb{R}~|~\exists ~r(t)(\ne 0)\in \mathbb{C}, ~S_2(t+1,1,r(t))= 0 \text{ with }\frac{\pi}{2n}<\arg r(t) <\frac{3\pi}{2n} \}$. By (\ref{zero in K}), we have $0\in K$. Note that $K$ is open by construction, if we can show $K$ is closed, then $K$ is $\mathbb{R}$. Let $t_0$ be a limit point of $K$. Then $S_2(t_0+1,1,z)$ has a root $r(t_0)$ which is a limit point of $\{r(t)~|~t\in K\}$. By Lemma \ref{no real root of E-aR}, for any $t\in \mathbb{R}$, as a polynomial in $z,$ the roots of $\frac{1}{a}E(t,1)-R(1,z)$ are non-zero. So the roots of $S_2(t+1,1,z)$ are also non-zero. In particular, $r(t_0)$ is not zero. Suppose that $t_0\notin K$, then we have $\arg r(t_0)=\frac{m\pi}{2n}$, where $m$ is $1$ or $3$. We may assume $r(t_0)=\rho \exp\frac{im\pi}{2n}$, where $\rho$ is a positive real number. Then $\frac{1}{a}E(t_0,1)-R(1,\rho \exp\frac{im\pi}{2n})$ is $0$. To get a contradiction, we show for any $d\in \mathbb{R}$, we have $R(1,\rho \exp\frac{im\pi}{2n})+d\ne 0$. If $R(1,\rho \exp\frac{im\pi}{2n})+d=0$, we have the following identities:
    \begin{align*}
        &(R(1,\rho \exp\frac{im\pi}{2n})+d)(\rho^2\exp\frac{2im\pi}{2n}-1)\\
        =&R(1,\rho \exp\frac{im\pi}{2n})(\rho^2\exp\frac{2im\pi}{2n}-1)+d(\rho^2\exp\frac{2im\pi}{2n}-1)\\
        =&\rho^{2n+2}\exp\frac{im(2n+2)\pi}{2n}-1+d\rho^2\exp\frac{2im\pi}{2n}-d\\
        =&(d\rho^2-\rho^{2n+2})\exp\frac{im\pi}{n}-(d+1)=0. 
    \end{align*}
    Since $n$ is bigger than $3$ and $m$ is $1$ or $3$, $\exp\frac{im\pi}{n}$ is not real. We must have $d+1=0$ and $d\rho^2-\rho^{2n+2}=0$. But it implies $d\rho^2-\rho^{2n+2}=-\rho^2-\rho^{2n+2}=0$, which is absurd.
\end{proof}
Now we complete the proof for the case $l$ is odd. By Proposition \ref{classification for prim solu} and Proposition \ref{ass app solu is prim}, we have the following possibilities (note that the coefficient of $T^{2n}$ in $E(T,X_1)$ is $1$):\\

 $E(T,X_1)$ is $T^{2n}$ or $a\Sigma_n(\frac{T^2}{b_1},X_1^2)$; $E(T-X_1,X_1)$ is $T^{2n}$ or $a\Sigma_n(\frac{T^2}{b_2},X_1^2)$,
 where $b_i(\in \mathbb{Q})$ satisfy $b_i^n=a$$(i=1,2)$.
\begin{proposition}\label{exclude l odd}
   The above possibilities are all impossible.
\end{proposition}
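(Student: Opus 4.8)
The plan is to dispose of each of the four shapes for $E(T,X_1)$ singled out just before the statement by inspecting a single Chern-class coefficient. Write $E(T,X_1)=\sum_{k=0}^{2n}e_kX_1^kT^{2n-k}$. The step I would carry out first is to pin down $e_1$ from the geometry: since $E(T,X_1)$ represents $C_{p^*E}(T)=T^{2n}-c_1(p^*E)T^{2n-1}+\cdots$, we have $e_1X_1=-c_1(p^*E)=-p^*c_1(E)$; and because $\operatorname{Pic}(\mathbb{Q}^{2n+1})=\mathbb{Z}H$ while $E$ restricts to $\mathcal{O}_L^{\oplus(l+1)}\oplus\mathcal{O}_L(-1)^{\oplus(2n-l-1)}$ on every line $L$, one gets $c_1(E)\cdot L=-(2n-l-1)$, hence $c_1(E)=-(2n-l-1)H$ and therefore $e_1=2n-l-1$. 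The constraints $l+1\ge 2n-l-1\ge 1$ give $1\le e_1\le n$, so in particular $e_1\ne 0$ and (as $n\ge 3$) $e_1\ne 2n$.

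Next I would run through the four candidates from Propositions~\ref{ass app solu is prim} and~\ref{classification for prim solu}, using $e_0=1$ and $b_i^{n}=a$. If $E(T,X_1)=T^{2n}$, then $e_1=0$, a contradiction. If $E(T,X_1)=a\Sigma_{n}(T^2/b_1,X_1^2)=a\sum_{i=0}^{n}b_1^{-i}T^{2i}X_1^{2n-2i}$, only even powers of $T$ occur, so again $e_1=0$. If $E(T-X_1,X_1)=T^{2n}$, then replacing $T$ by $T+X_1$ gives $E(T,X_1)=(T+X_1)^{2n}$, so $e_1=2n$, contradicting $e_1\le n$. Finally, if $E(T-X_1,X_1)=a\Sigma_{n}(T^2/b_2,X_1^2)$, then $E(T,X_1)=a\sum_{i=0}^{n}b_2^{-i}(T+X_1)^{2i}X_1^{2n-2i}$, and the monomial $X_1T^{2n-1}$ occurs only in the $i=n$ term $ab_2^{-n}(T+X_1)^{2n}$, where it has coefficient $2n\cdot ab_2^{-n}=2n$ (since $ab_2^{-n}=1$); thus $e_1=2n$, again impossible. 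This exhausts the list, so no such $E$ exists and $l$ cannot be odd.

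I do not expect a genuine obstacle here: once $e_1=2n-l-1$ is recorded, the four verifications are immediate finite checks. The one point that needs a little care is the identification of $e_1$, which rests on the routine facts that $p^{*}H=X_1$ and that the uniform splitting type determines $c_1(E)$ via $\operatorname{Pic}(\mathbb{Q}^{2n+1})=\mathbb{Z}H$.
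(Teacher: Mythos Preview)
Your argument is correct and takes a genuinely different route from the paper. The paper eliminates the four shapes by structural factorisation arguments: for $E(T,X_1)=T^{2n}$ (and the shifted version) it invokes Eisenstein's criterion over $\mathbb{Q}[X_1,X_2]$ to show $T^{2n}-aR(X_1,X_2)$ is irreducible, contradicting the factorisation $S_1\cdot S_2$; for $E(T,X_1)=\Sigma_n(T^2,b_1X_1^2)$ it uses the identity $\Sigma_n(T^2,b_1X_1^2)-R=(T^2-b_1X_2^2)\Sigma_{n-1}(T^2,b_1X_1^2,b_1X_2^2)$ together with the irreducibility of $\Sigma_{n-1}(T^2,X_1^2,X_2^2)$ (proved separately in the appendix) to force $S_2(T,X_1,X_2)=(T-X_1)^2-b_1X_2^2$, which fails to be symmetric; and the shifted $\Sigma_n$ case is killed by a degree count. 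Your approach bypasses all of this by reading off the single numerical invariant $e_1=2n-l-1\in[1,n]$ from the splitting type and checking that each of the four candidate polynomials gives $e_1\in\{0,2n\}$. This is strictly more elementary: it avoids Eisenstein, avoids the appendix proposition on irreducibility of $\Sigma_{n-1}$, and reduces everything to a first-Chern-class computation plus four one-line coefficient checks. The paper's method has the mild advantage of working directly at the level of the factorisation in $(\ast)$ without appealing back to the geometry of $E$, but your route is shorter and needs less machinery.
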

\begin{proof}
    If $E(T,X_1)$ is $T^{2n}$,  $T^{2n}-aR(X_1,X_2)$ is $S_1(T,X_1,X_2)S_2(T+X_1,X_1,X_2)$. However, we have
    \[X_1-\exp \frac{2\pi i}{2n+2}X_2 \mid R(X_1,X_2),~(X_1-\exp \frac{2\pi i}{2n+2}X_2)^2 \nmid R(X_1,X_2),~X_1-\exp \frac{2\pi i}{2n+2}X_2 \nmid 1.\]
    By Eisenstein's criterion, $T^{2n}-aR(X_1,X_2)$ is irreducible considered as the polynomial in the variable $T$ with coefficients in $\mathbb{Q}[X_1,X_2]$. This leads to a contradiction. Similar arguments can be applied to the case $E(T-X_1,X_1)$ is $T^{2n}$.\\

    If $E(T,X_1)$ is $b_1^n\Sigma_n(\frac{T^2}{b_1},X_1^2)(=\Sigma_{n}(T^2,b_1X_1^2))$, then we have the equalities $E(T,X_1)-aR(X_1,X_2)=\Sigma_n(T^2,b_1X_1^2)-\Sigma_n(b_1X_1^2,b_1X_2^2)=(T^2-b_1X_2^2)\Sigma_{n-1}(T^2,b_1X_1^2,b_1X_2^2)$ (for the last equality, see \cite[Section 7.2]{EHS} for example). We will make use of the following claim, whose proof will be given in Proposition \ref{A3}.\\
    
 \textbf{Claim:}  
 $\Sigma_{n-1}(T^2,X_1^2,X_2^2)$ is irreducible in $\mathbb{C}[T,X_1,X_2]$.\\
    
    As $b_1$ is not $ 0$, $\Sigma_{n-1}(T^2,b_1X_1^2,b_1X_2^2)$ is irreducible. We have $(T^2-b_1X_2^2)\Sigma_{n-1}(T^2,b_1X_1^2,b_1X_2^2)=S_1(T,X_1,X_2)S_2(T+X_1,X_1,X_2)$. Since $S_1(T,X_1,X_2)$ is symmetric in $X_1,X_2$ and the coefficient of $T^{2n}$ of $S_1$ is $1$,
    then  $S_1(T,X_1,X_2)$ is $\Sigma_{n-1}(T^2,b_1X_1^2,b_1X_2^2)$ and hence $S_2(T+X_1,X_1,X_2)$ is $T^2-b_1X_2^2$. So $S_2(T,X_1,X_2)$ is $(T-X_1)^2-b_1X_2^2,$ which is not symmetric in $X_1,X_2.$ We get a contradiction.\\
    
    If $E(T-X_1,X_1)$ is $b_2^n\Sigma_n(\frac{T^2}{b_2},X_1^2)(=\Sigma_{n}(T^2,b_2X_1^2))$, we have $(T^2-b_2X_2^2)\Sigma_{n-1}(T^2,b_2X_1^2,b_2X_2^2)=S_1(T-X_1,X_1,X_2)S_2(T,X_1,X_2)$. Similarly, $S_2(T,X_1,X_2)$ is $\Sigma_{n-1}(T^2,b_2X_1^2,b_2X_2^2)$. But  $\deg(S_2)$ is at most $ \deg(S_1)$ and $n$ is at least $3$, it is impossible. 
\end{proof}
Now  we prove our main theorem.
\begin{theorem}\label{split of rk 2n bd on Q}
  Assume $n$ is at least $3$, then  every uniform bundle of rank $2n$ on $\mathbb{Q}^{2n+1}$ or $\mathbb{Q}^{2n+2}$ splits and $\mu(\mathbb{Q}^{2n+1})$ is $2n$.
\end{theorem}
\begin{proof}
Combining Proposition \ref{mor from Q to Gr Constant}, Proposition \ref{exclude by cal l=2n-2}, Proposition \ref{exclude by cal (2,5)} and Proposition \ref{exclude l odd},     we can show that every uniform bundle of rank $2n$ on $\mathbb{Q}^{2n+1}$ splits for $n\geq 3$. As the tangent bundle $T_{\mathbb{Q}^{2n+1}}$ is unsplit, the threshold $\mu(\mathbb{Q}^{2n+1})$ is $2n$. \\

Now let $E'$ be a uniform bundle of rank $2n$ on $\mathbb{Q}^{2n+2}$. For every smooth hyperplane section $\mathbb{Q}^{2n+1}\hookrightarrow \mathbb{Q}^{2n+2}$, the restriction $E'|_{\mathbb{Q}^{2n+1}}$ is a uniform bundle of rank $2n$ on $\mathbb{Q}^{2n+1}$, hence $E'|_{\mathbb{Q}^{2n+1}}$ splits. So by \cite[Corollary 3.3]{ottaviani1989some}, $E'$ splits.
\end{proof}

\begin{remark}
For a majority of  generalized Grassmannians $X$, $\mu(X)$ is  $e.d.(\mathrm{VMRT})$ (see \cite[Page 3, Table 2]{FLL24}). Note that  the  $e.d.(\mathrm{VMRT})$ of  both $\mathbb{Q}^{2n+1}$ and $\mathbb{Q}^{2n+2}$ are $2n-1$. Theorem \ref{split of rk 2n bd on Q} shows that the splitting thresholds for uniform vector bundles on $\mathbb{Q}^{2n+1}$ and $\mathbb{Q}^{2n+2}(n\geq 3)$ are at least $2n$. So $\mathbb{Q}^{2n+1}$ and $\mathbb{Q}^{2n+2}(n\geq 3)$ are the first known examples such that $\mu(X)$ is bigger than $e.d.(\mathrm{VMRT})$.
\end{remark}

In \cite{FLL24}, the authors classify all unsplit uniform bundles of minimal rank on the generalized Grassmannians $B_n/P_k~(2\leq k < \frac{2n}{3})$, $B_n/P_{n-2}$, $B_n/P_{n-1}$ and $D_n/P_k~(2\leq k < \frac{2n-2}{3})$, $D_n/P_{n-3}$, $D_n/P_{n-2}$. As direct corollaries of Theorem \ref{split of rk 2n bd on Q}, we can give further classification results for uniform bundles on $B_n/P_k~(\frac{2n}{3}\leq k \leq n-3)$ and $D_n/P_k~(\frac{2n-2}{3}\leq k \leq n-4)$.

\begin{corollary}\label{B1}
Let $X$ be  $B_n/P_k$, where $k$ is $\frac{2n}{3}$ and  is at least $ 6$. Let $E$ be a uniform vector bundle on $X$ of rank $r$.
\begin{itemize}
    \item If $r$ is smaller than $k$, then $E$ is a direct sum of line bundles.
    \item If $r$ is $k$, then $E$ is either a direct sum of line bundles or  $E_{\lambda_1}\otimes L$ or  $E_{\lambda_1}^{\vee}\otimes L$ for some line bundle $L$, where $E_{\lambda_1}$ is the irreducible homogeneous bundle corresponding to the highest weight $\lambda_1$. 
\end{itemize}
\end{corollary}
  \begin{proof}
Note that $e.d.(\mathrm{VMRT})$ of $X$ is $k-1(=2n-2k-1)$.  By
\cite[Theorem 1.1 (1)]{FLL24}, the first assertion follows. For the case $r=k$, since $2n-2k+1=k+1$ is at least $ 7$, $E|_{\mathbb{Q}^{2n-2k+1}}$ splits by Theorem \ref{split of rk 2n bd on Q}. Suppose $E$  is unsplit, then $E|_{\mathbb{P}^k}$ is also unsplit. The second assertion then follows from \cite[Proposition 4.4]{FLL24}. 
  \end{proof}  
  
\begin{corollary}\label{B2}
Let $X$ be  $B_n/P_k$ with $\frac{2n}{3}< k \leq n-3$. Every uniform bundle of rank $2n-2k$ on $X$ splits.
\end{corollary}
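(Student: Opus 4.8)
The plan is to run the argument behind \cite[Proposition 3.5]{FLL24} one rank above $e.d.(\mathrm{VMRT})$, exactly as in Section 3 for quadrics and in the proof of Corollary \ref{B1}, now exploiting the product structure of the variety of lines through a point of $X=B_n/P_k=OG(k,2n+1)$. Suppose for contradiction that there is an unsplit uniform bundle $E$ of rank $2n-2k$ on $X$. By \cite[Proposition 3.7]{FLL24} we may assume its splitting type is
\[
(\underbrace{0,\dots,0}_{a},\underbrace{-1,\dots,-1}_{b}),\qquad a\ge b\ge 1,\quad a+b=2n-2k
\]
(if $b=0$, then $E$ is trivial on every line, hence splits). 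As in the setup of Section 3, the relative Harder--Narasimhan filtration of $p^*E$ over the family of lines yields, for each $x\in X$, a morphism $\psi_x\colon\mathcal{D}_x\to Gr(a,2n-2k)$, where $\mathcal{D}_x$ is the variety of lines through $x$; by \cite[Proposition 3.5]{FLL24} it suffices to show every $\psi_x$ is constant. The geometric input is that a line through $W_0\in OG(k,2n+1)$ is a pencil $\{W\mid V_{k-1}\subset W\subset V_{k+1}\}$ with $V_{k-1}\subset W_0\subset V_{k+1}$, $\dim V_{k-1}=k-1$ and $V_{k+1}$ isotropic of dimension $k+1$; since $V_{k-1}$ ranges over $\mathbb{P}(W_0^\vee)\cong\mathbb{P}^{k-1}$ and $V_{k+1}=W_0+\langle v\rangle$ over the quadric $\mathbb{Q}^{2(n-k)-1}\subset\mathbb{P}(W_0^\perp/W_0)$ independently, one has $\mathcal{D}_x\cong\mathbb{P}^{k-1}\times\mathbb{Q}^{2(n-k)-1}$. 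Fixing $V_{k+1}$ gives the lines through $x$ contained in the linear subspace $\mathbb{P}^k=\{W\mid W\subset V_{k+1}\}\subset X$, and fixing $V_{k-1}$ gives the lines through $x$ contained in the sub-quadric $\{W\mid V_{k-1}\subset W\subset V_{k-1}^\perp\}\cong\mathbb{Q}^{2(n-k)+1}\subset X$; these are the two rulings of $\mathcal{D}_x$.

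Next I would restrict $E$ to these sub-varieties. Since $k>\frac{2n}{3}$ we have $2n-2k\le k-1$, so $E$ restricted to each sub-$\mathbb{P}^k$ is a uniform bundle of rank smaller than $k$, hence splits by the main theorems of \cite{EHS} and \cite{sato1976uniform}; such a split uniform bundle admits a canonical maximal trivial summand, namely the image of the evaluation of its global sections, whose classifying morphism on the corresponding $\mathbb{P}^{k-1}$ is constant. Because the Harder--Narasimhan type of $E|_L$ depends only on the line $L$, this classifying morphism is the restriction of $\psi_x$ to the ruling $\mathbb{P}^{k-1}\times\{[v]\}$, so $\psi_x$ is constant along every such ruling. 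Since $k\le n-3$ we have $n-k\ge 3$, so $E$ restricted to each sub-quadric $\mathbb{Q}^{2(n-k)+1}=\mathbb{Q}^{2n-2k+1}$ is a uniform bundle of rank $2(n-k)=2n-2k$, hence splits by Theorem \ref{split of rk 2n bd on Q}; the same reasoning shows $\psi_x$ is constant along every ruling $\{[V_{k-1}]\}\times\mathbb{Q}^{2(n-k)-1}$. A morphism out of a product that is constant along every fibre of each projection is constant, so every $\psi_x$ is constant, whence $E$ splits by \cite[Proposition 3.5]{FLL24}, contradicting our assumption. Therefore every uniform bundle of rank $2n-2k$ on $X$ splits.

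The main obstacle is twofold. The first, more routine, part is to pin down the structure of $\mathcal{D}_x$ and to identify its two rulings with families of lines lying inside genuine linear subspaces $\mathbb{P}^k$ and sub-quadrics $\mathbb{Q}^{2(n-k)+1}$ of $X$, together with the compatibility of the relative Harder--Narasimhan filtration with restriction to such sub-varieties; this is where uniformity is used, and it leans on the framework of \cite{FLL24}. The second, and crucial, point is that the whole reduction rests on Theorem \ref{split of rk 2n bd on Q}: it is precisely the hypothesis $k\le n-3$ that gives $n-k\ge 3$, so the sub-quadrics have the form $\mathbb{Q}^{2m+1}$ with $m\ge 3$, and the hypothesis $k>\frac{2n}{3}$ that forces the rank $2n-2k$ below $k$, so the restrictions to the sub-$\mathbb{P}^k$'s split for free. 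Because both numerical conditions hold, no exceptional small cases (in the spirit of $(l,2n+1)=(2,5)$ in Proposition \ref{mor from Q to Gr Constant}, or $l=2n-2$ in Section 3) arise, and the argument goes through uniformly; outside this range one would have to replace Theorem \ref{split of rk 2n bd on Q} or the results of \cite{EHS} and \cite{sato1976uniform} by a finer case analysis as in Section 3 and in \cite{FLL24}.
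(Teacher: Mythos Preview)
Your argument is correct, but the paper's proof is considerably shorter and uses a different mechanism to pass from local splitting to global splitting. Both proofs establish the same two facts: that $E$ restricted to every linear $\mathbb{P}^k\subset X$ splits (since $k>\frac{2n}{3}$ forces $2n-2k<k$) and that $E$ restricted to every sub-quadric $\mathbb{Q}^{2n-2k+1}\subset X$ splits (since $k\le n-3$ gives $n-k\ge 3$, so Theorem \ref{split of rk 2n bd on Q} applies). The difference is in the concluding step. You reduce to a two-step splitting type via \cite[Proposition~3.7]{FLL24}, identify $\mathcal{D}_x\cong\mathbb{P}^{k-1}\times\mathbb{Q}^{2(n-k)-1}$, and argue that $\psi_x$ is constant on each ruling, hence constant, and then invoke \cite[Proposition~3.5]{FLL24}. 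The paper instead observes that every $2$-plane in $X$ is contained in one of the $\mathbb{P}^k$'s or one of the $\mathbb{Q}^{2n-2k+1}$'s, so $E$ splits on every $2$-plane, and then quotes \cite[Corollary~3.6]{du2021vector} directly to conclude that $E$ splits on $X$. Your route makes the VMRT geometry explicit and stays within the $\psi_x$ framework of Section~3, which is conceptually satisfying and generalises readily; the paper's route avoids the reduction to two-step type and the product-of-rulings argument entirely, trading them for a single appeal to a known $2$-plane criterion.
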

\begin{proof}
If $E$ is a uniform bundle of rank $2n-2k$ on $X$, then $E|_{\mathbb{P}^k}$ splits, as $2n-2k$ is smaller than $k$. On the other hand, since $k$ is at most $ n-3$ and hence $2n-2k+1$ is at least $7$, $E|_{\mathbb{Q}^{2n-2k+1}}$ splits by Theorem \ref{split of rk 2n bd on Q}. Because any $2$-plane in $X$ is contained in a $\mathbb{P}^k$ or $\mathbb{Q}^{2n-2k+1}$, $E$ splits by \cite[Corollary 3.6]{du2021vector}.
\end{proof}  

Similar to the proofs of the above corollaries, we can prove the following results.
\begin{corollary}\label{D1}
Let $X$ be  $D_n/P_k$, where $k$ is $\frac{2n-2}{3}$ and $k$ is at least $ 6$. Let $E$ be a uniform vector bundle on $X$ of rank $r$.
\begin{itemize}
    \item If $r$ is smaller than $k$, then $E$ is a direct sum of line bundles.
    \item If $r$ is $k$, then $E$ is either a direct sum of line bundles or  $E_{\lambda_1}\otimes L$ or  $E_{\lambda_1}^{\vee}\otimes L$ for some line bundle $L$, where $E_{\lambda_1}$ is the irreducible homogeneous bundle corresponding to the highest weight $\lambda_1$. 
\end{itemize}
\end{corollary}
  \begin{corollary}\label{D2}
Let $X$ be $D_n/P_k$ with $\frac{2n-2}{3}< k \leq n-4$. Every uniform bundle of rank $2n-2k-2$ on $X$ splits.
\end{corollary}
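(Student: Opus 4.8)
The plan is to follow the proof of Corollary \ref{B2} almost verbatim, replacing the numerical data of $B_n/P_k$ by those of $D_n/P_k$. The geometric input I would invoke is that through a general point of $X=D_n/P_k$ there pass two families of linearly embedded homogeneous subvarieties, namely projective spaces $\mathbb{P}^k$ and smooth quadrics $\mathbb{Q}^{2n-2k-1}$, and that every $2$-plane contained in $X$ lies in a member of one of these two families. Granting this, the splitting criterion \cite[Corollary 3.6]{du2021vector} reduces the statement to checking that $E$ restricts to a split bundle on each such $\mathbb{P}^k$ and on each such $\mathbb{Q}^{2n-2k-1}$.

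First I would handle the $\mathbb{P}^k$'s. The hypothesis $\frac{2n-2}{3}<k$ is equivalent to $2n-2k-2<k$, so $E|_{\mathbb{P}^k}$ is a uniform bundle of rank strictly less than $k$ on $\mathbb{P}^k$, hence it splits by the theorems of \cite{EHS} and \cite{sato1976uniform} quoted in the introduction.

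Next I would handle the quadrics. The hypothesis $k\leq n-4$ gives $n-k-1\geq 3$, and one has the identities $2n-2k-1=2(n-k-1)+1$ and $2n-2k-2=2(n-k-1)$. Therefore $E|_{\mathbb{Q}^{2n-2k-1}}$ is a uniform bundle of rank $2m$ on $\mathbb{Q}^{2m+1}$ with $m=n-k-1\geq 3$, and it splits by Theorem \ref{split of rk 2n bd on Q}. Combining the two restriction statements through \cite[Corollary 3.6]{du2021vector} then yields that $E$ itself splits.

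The one genuinely non-routine point is the geometric claim that every $2$-plane in $D_n/P_k$ is contained in a member of one of the two distinguished families of subvarieties; this is the same ingredient that underlies the $B_n$-analogue in the proof of Corollary \ref{B2}, and it is part of the standard description of linear subspaces of isotropic Grassmannians, so I expect it to require only careful bookkeeping with Schubert cells rather than a new idea. Everything else is the elementary chain of inequalities coming from $\frac{2n-2}{3}<k\leq n-4$.
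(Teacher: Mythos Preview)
Your approach is exactly the one the paper intends (it says explicitly that the proofs of Corollaries \ref{D1} and \ref{D2} are ``similar to the proofs of the above corollaries''), and your chain of inequalities is correct. There is, however, one slip in the geometry: the quadric special subvarieties of $D_n/P_k$ through a point are even-dimensional, namely $\mathbb{Q}^{2n-2k}$, not $\mathbb{Q}^{2n-2k-1}$. Indeed, fixing an isotropic $(k-1)$-plane $V_{k-1}$, the isotropic $k$-planes containing it correspond to isotropic lines in $V_{k-1}^{\perp}/V_{k-1}$, a nondegenerate quadratic space of dimension $2n-2(k-1)=2n-2k+2$, hence a quadric $\mathbb{Q}^{2n-2k}$. (Equivalently, the subdiagram on nodes $k,\dots,n$ is of type $D_{n-k+1}$, and $D_{n-k+1}/P_1\cong\mathbb{Q}^{2(n-k+1)-2}=\mathbb{Q}^{2n-2k}$.)

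Fortunately this does not damage the argument: with $m=n-k-1$ one has $\mathbb{Q}^{2n-2k}=\mathbb{Q}^{2m+2}$ and the rank is $2n-2k-2=2m$, so the \emph{even}-dimensional case of Theorem \ref{split of rk 2n bd on Q} applies under the same constraint $m\ge 3$, i.e.\ $k\le n-4$. After replacing $\mathbb{Q}^{2n-2k-1}$ by $\mathbb{Q}^{2n-2k}$ and invoking the $\mathbb{Q}^{2m+2}$ part of Theorem \ref{split of rk 2n bd on Q}, your proof is correct and coincides with the paper's.
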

\section{Splitting type of unsplit uniform bundle of minimal rank}
There are some restrictions on the splitting types of unsplit uniform bundles of minimal rank. The following theorem generalizes \cite[Corollary 4.7]{du2021vector} to generalized Grassmannians associated with short roots.
 \begin{theorem}\label{splitting type of unsplit bundle}
     Assume $E$ is an unsplit uniform bundle on a generalized Grassmannian $X$ whose rank is $\mu(X)+1$. If the splitting type of $E$ is $(a_1,a_2,\dots,a_r)$ $(a_1\ge a_2\ge\cdots\ge a_r)$, then $ max\{a_i-a_{i+1}|1\le i\le r-1\}$ is $1$.
 \end{theorem}

  We now prove Theorem \ref{splitting type of unsplit bundle}.
 \begin{proof}
    Let $L$ be a line in $X$. We denote the inclusion morphism by $f_L:L(\simeq \mathbb{P}^1)\rightarrow X$. As $X$ is a homogeneous variety, the tangent bundle $T_X$ is globally generated. Then $f_L^*(T_X)$ is also globally generated and hence $H^1(L,f_L^*(T_X))$ vanishes. So Mor$(\mathbb{P}^1,X)$ is smooth at $[f_L]$.\\

      If $ max\{a_i-a_{i+1}|1\le i\le r-1\}$ is $0$, by \cite[Theorem 1.2]{Pan2015Tri}, $E$ is trivial. If $ max\{a_i-a_{i+1}|1\le i\le r-1\}$ is at least $2$, there would exist a number $j<r$ such that $a_j-a_{j+1}\ge 2$. By \cite[Proposition 3.1]{PRT24}, there would be a subbundle $W$ of $E$ satisfying the following two properties:
     \begin{itemize}
         \item $W$ is a uniform bundle of splitting type $(a_1,\dots,a_j)$.
         \item the quotient $U\triangleq E/W$ is a uniform bundle of splitting type 
             $(a_{j+1},\dots,a_r)$.
     \end{itemize}
     As rk$(U)$ and rk$(W)$ are at most $\mu(X)$, both $U$ and $W$ split. Since $\text{Ext}^1(U,W)$ vanishes, the exact sequence $0\rightarrow W\rightarrow E\rightarrow U\rightarrow 0$ would split. So $E(\simeq U\oplus W)$ would be a direct sum of line bundles. 
 \end{proof}
 In \cite[Page 29, Conjecture]{Ellia}, Ellia proposes a conjecture that  every uniform bundle on $\mathbb{P}^{n}$ of  splitting type 
 $$(\underbrace{a_1,\dots,a_1}_{l_1},\underbrace{a_2,\dots,a_2}_{l_2},\dots, \underbrace{a_k,\dots,a_k}_{l_k})$$ with $a_i> a_{i+1}$ and $l_i\le n-1$ for any $1\le i\le k$ necessarily splits. Using the same argument as in Theorem \ref{splitting type of unsplit bundle}, we can reduce this conjecture to the case $l_i\le n-1$ and $a_i-a_{i+1}=1$ for any $1\le i\le k-1$. In particular, the following Theorem \ref{splitting type2} partially answers Ellia's conjecture in a more general setting.
 \begin{theorem}\label{splitting type2}
     Given a generalized Grassmannian $X$. Let $E$ be a uniform bundle on $X$. Assume that the splitting type of $E$ is $(\underbrace{a_1,\dots,a_1}_{l_1},\underbrace{a_2,\dots,a_2}_{l_2},\dots, \underbrace{a_k,\dots,a_k}_{l_k})$ with $a_i> a_{i+1}$. If for any $1\le i\le k-1$,  $a_i-a_{i+1} $ is at least $2$, then $E$ splits.
 \end{theorem}

 \begin{proof}
     We prove by induction on $k$. The case $k=1$ is obviously true. Suppose the case $k<n$ is true. We now prove the case $k=n$. \\

    Following the same strategy in the proof of Theorem \ref{splitting type of unsplit bundle}, by \cite[Proposition 3.1]{PRT24}, there exists a subbundle $W$ of $E$ satisfying the following two properties:
    
     \begin{itemize}
         \item $W$ is a uniform bundle of splitting type $(\underbrace{a_1,\dots,a_1}_{l_1})$.
         \item the quotient $U\triangleq E/W$ is a uniform bundle of splitting type 
             $(\underbrace{a_2,\dots,a_2}_{l_2},\dots, \underbrace{a_n,\dots,a_n}_{l_n})$
     \end{itemize}
     Then $W$ is isomorphic to $\mathcal{O}_X^{\oplus l_1}(a_1)$ where $\mathcal{O}_X(1)$ is the ample generator of Pic$(X)$. By induction, the bundle $U$ splits. Since $\text{Ext}^1(U,W)$ vanishes, the exact sequence $0\rightarrow W\rightarrow E\rightarrow U\rightarrow 0$ splits. So $E(\simeq U\oplus W)$ is a direct sum of line bundles. 
     
 \end{proof}

\appendix
\section{Additional details for Proposition 2.1}\label{appA}
In this appendix, we use notations as in the proof of Proposition \ref{mor from Q to Gr Constant} and provide more details by sketching the proof of \cite[Page 204, Case (ii)]{tango1976morphisms} and \cite[Lemma 3.3 (ii) and Case 1 of (iii)]{tango1974n}.\\

Suppose that there exists a non-constant morphism $f:\mathbb{Q}^{2n+1}\rightarrow \mathbb{G}(l,2n+1)$. Let $g(t)$ and $h(t)$ be the polynomials in Proposition \ref{mor from Q to Gr Constant}. We set
\begin{align*}
    g(t)=1-c_1t+c_2t^2+\cdots+(-1)^{l+1}c_{l+1}t^{l+1},\\
    h(t)=1+d_1t+d_2t^2+\cdots+d_{2n+1-l}t^{2n+1-l}.
\end{align*}
Let $a$ be $\sqrt[2n+2]{c_{l+1}d_{2n+1-l}}$. Then, we have the equation (see Page 3, the equation (\ref{eq for chern}))
\begin{align}\label{formula 1}
    g(t)h(t)=1+(-1)^{l+1}c_{l+1}d_{2n+1-l}t^{2n+2}=1+(-1)^{l+1}a^{2n+2}t^{2n+2}.
\end{align}

First, we use the same method of \cite[Lemma 3.3]{tango1974n} to prove that $a$ is a rational number.
\begin{proposition}\label{symmetry of polynomials}
    We have the identities $t^{l+1}g(\frac{1}{at})=(-1)^{l+1}g(\frac{t}{a})$ and $t^{2n+1-l}h(\frac{1}{at})=h(\frac{t}{a})$.
\end{proposition}
\begin{proof}
    Set $g(t)=(1-\alpha_1a t)(1-\alpha_2at)\cdots (1-\alpha_{l+1}at)$. By formula (\ref{formula 1}), we have
    \begin{align}\label{relations of roots}
        |\alpha_i|=1,~\alpha_i\ne \alpha_j \text{ if }i\ne j,~\alpha_i^{-1}=\overline{\alpha_i}\in \{\alpha_1,\alpha_2,\dots,\alpha_{l+1}\}.
    \end{align}
    Furthermore, $\alpha_1\cdots \alpha_{l+1}a^{l+1}$ equals $c_{l+1}$. Since both $a$ and $c_{l+1}$ are positive real numbers, we must have $\alpha_1\cdots \alpha_{l+1}=1$. Combining with (\ref{relations of roots}), we get
    \begin{align*}
        t^{l+1}g(\frac{1}{at})&=(t-\alpha_1)(t-\alpha_2)\cdots (t-\alpha_{l+1})=(\alpha_1^{-1}t-1)(\alpha_2^{-1}t-1)\cdots (\alpha_{l+1}^{-1}t-1)\alpha_1\cdots\alpha_{l+1}\\
        &=(\alpha_1t-1)(\alpha_2t-1)\cdots (\alpha_{l+1}t-1)=(-1)^{l+1}g(\frac{t}{a}).
    \end{align*}
    Similarly, we set $h(t)=(1+\alpha_1'a t)(1+\alpha_2'at)\cdots (1+\alpha_{2n+1-l}'at)$ and apply similar arguments to obtain $t^{2n+1-l}h(\frac{1}{at})=h(\frac{t}{a})$.
\end{proof}
\begin{corollary}\label{a is rational}
    The number $a$ is $\frac{c_{m+1}}{c_m}$, where $m$ is $\frac{l}{2}$.
\end{corollary}
\begin{proof}
    The equation $t^{l+1}g(\frac{1}{at})=(-1)^{l+1}g(\frac{t}{a})$ shows that $c_ia^{-i}$ equals $c_{l+1-i}a^{-l-1+i}$ for $1\leq i\leq l$. We get the desired conclusion by taking $i$ to be $\frac{l}{2}$.
\end{proof}

Once one proves that $a$ is rational, as in the proof of Proposition 2.1, both $a,C_i$ and $D_j$ are positive integers, we claim that\\

\textbf{Claim:} If $a,C_i$ and $D_j$ are all positive integers, we can apply the same proof of \cite[Case (ii)]{tango1976morphisms} to get a contradiction.\\

Now we sketch the proof of \cite[Page 204, Case (ii)]{tango1976morphisms}. Let $G(t)$ be $g(\frac{t}{a})(=1-C_1t+C_2t^2+\cdots+(-1)^{l+1}C_{l+1}t^{l+1})$ and $H(t)$ be $h(\frac{t}{a})(=1+D_1t+D_2t^2+\cdots+D_{2n+1-l}t^{2n+1-l})$. From formula (\ref{formula 1}), we have $H(t)G(t)=1+(-1)^{l+1}t^{2n+2}$.

\begin{lemma}
    Suppose that there exists a non-constant morphism $f$ from $\mathbb{Q}^{2n+1}$ to $\mathbb{G}(l,2n+1)$. Then we have the equations $n=l$,  $C_1=C_2=\cdots=C_{l}=D_1=D_2=\cdots=D_{2n-l}=2$ and $C_{l+1}=D_{2n+1-l}=1$.
\end{lemma}
\begin{proof}
    Since $\mathbb{G}(l,2n+1)$ is isomorphic to $\mathbb{G}(2n+1-l,2n+1)$, we may assume the inequality $n\geq l$. From the equation $H(t)G(t)=1+(-1)^{l+1}t^{2n+2}$ and the fact that $C_i,D_j$ are positive integers, the first part of Tango's proof allows us to conclude that $H(1)$ equals $2n+2$ and $D_1$ is bigger than $1$. 
    
    Next we prove that $D_1,D_2,\dots, D_{2n-l}$ are not less than $2$. Suppose that there exists a positive integer $k(\leq 2n-l)$ such that $D_k$ equals $1$. Let $r$ be $\min\{k|D_k=1\}$. Then both $D_1$ and $D_{r-1}$ are bigger than $1$. From $H(t)=t^{2n+1-l}H(\frac{1}{t})$ (Proposition \ref{symmetry of polynomials}), we obtain $D_{2n+1-l}=1$ and $D_{2n+1-i}=D_{i}$ for $1\leq i \leq 2n-l$. In particular, $D_{2n+1-r}$ is $D_r$. By the definition of $r$, we have $2r\leq 2n+1-l$. Let $Q$ be the universal quotient bundle of $\mathbb{G}(l,2n+1)$. By Pieri's formula, the class 
    \begin{align*}
         f^*\omega_{r,r,0,\dots,0}&=f^*(\omega_{r,0,\dots,0}^2-\omega_{r+1,0,\dots,0}\omega_{r-1,0,\dots,0})\\
         &=f^*(c_r(Q))^2-f^*(c_{r+1}(Q))f^*(c_{r-1}(Q))=(D_{r}^2-D_{r+1}D_{r-1})a^{2r}H^{2r}
    \end{align*}
    is the pullback of a Schubert cycle, which is numerically non-negative. (For the definition of the Schubert cycle $\omega_{a_0,\dots,a_{l}}$, we refer to \cite{tango1976morphisms}). So $D_r^2-D_{r+1}D_{r-1}\geq 0$. From the inequalities $D_{r}^2-D_{r+1}D_{r-1}\leq 1-2<0$, we get a contradiction. Hence, we have $2n+2=H(1)=1+D_1+\cdots+D_{2n+1-l}\geq 1+2(2n-l)+1=2(2n+1-l)$. Combining with the assumption $n\geq l$, we must have $n=l$, $D_1=D_2=\cdots=D_{2n-l}=2$ and $D_{2n+1-l}=1$. Finally, from $H(t)G(t)=1+(-1)^{l+1}t^{2n+2}$, we also have $C_1=C_2=\cdots=C_l=2$ and $C_{l+1}=1$.
\end{proof}

   Now we return to the proof of \textbf{Claim}. Since $l$ is even and $(l,2n+1)$ is not $ (2,5)$, we have $l\geq 4$ and $D_1=D_2=D_3=2$. Similar as above, we have $f^*\omega_{2,2,0,\dots,0}=f^*(\omega_{2,0,\dots,0}^2-\omega_{3,0,\dots,0}\omega_{1,0,\dots,0})=(D_2^2-D_3D_1)a^4H^4=0$. By Pieri's formula again, it shows that
   $$0=f^*(\omega_{2,2,0,\dots,0}\omega_{2n-1-l,0,\dots,0})=f^*\omega_{2n+1-l,2,0,\dots,0}+f^*\omega_{2n-l,2,1,0,\dots,0}+f^*\omega_{2n-l-1,2,2,0,\dots,0}.$$
   Since all the classes in the right hand side is numerically non-negative, we have $f^*\omega_{2n+1-l,2,0,\dots,0}=0$. On the other hand, we have
   \begin{align*}
       &f^*\omega_{2n+1-l,2,0,\dots,0}=f^*\omega_{2n-1-l,0,\dots,0}f^*\omega_{2,0,\dots,0}\\
       =&f^*(c_{2n+1-l}(Q))f^*(c_{2}(Q))=D_{2n+1-l}D_2a^{2n+3-l}H^{2n+3-l}\ne 0,
   \end{align*}
   which is a contradiction. 

\section{Classification of primitive approximate solutions}
We use methods in \cite{EHS} to classify primitive approximate solutions. Let $P(T,X_1)=\sum_{k=0}^{2n}p_kX_1^kT^{2n-k}$ be a primitive approximate solution. 
\begin{proposition}\label{A1}
    If $p_{2n}$ is $0$, then $P(T,X_1)$ is $bT^{2n}$ for some rational number $b$.
\end{proposition}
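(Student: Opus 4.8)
The plan is to exploit the defining property of a primitive approximate solution together with the structural constraint coming from Lemma \ref{Top coeff of app solu}. Since $p_{2n}=0$, part (2) of Lemma \ref{Top coeff of app solu} applies (the alternative (1), that every symmetrical divisor has degree one, is incompatible with primitivity because a degree-one polynomial $c(X_1+X_2)$ cannot vanish at $(1,y_0)$ for a primitive $(2n+2)$-th root of unity $y_0$). Thus $P(0,1)-R(1,z) = -(z^{2n}+z^{2n-2}+\cdots+1)$, whose roots are exactly the $(2n+2)$-th roots of unity other than $\pm 1$. Write $P(T,X_1)-R(X_1,X_2) = S_0(T,X_1,X_2)F(T,X_1,X_2)$ with $S_0$ the symmetrical divisor furnished by primitivity, and expand both factors as polynomials in $T$: say $S_0 = \sum_{i=0}^m s_i(X_1,X_2)T^{m-i}$ and $F = \sum_{j=0}^{2n-m} f_j(X_1,X_2)T^{2n-m-j}$, with leading coefficients $s_0,f_0$ constants (since $P$ has degree $2n$ in $T$ only in the pure $T^{2n}$ term, actually $\deg_T P \le 2n$ with coefficient of $T^{2n}$ equal to $p_0$; and $\deg_T R=0$).

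First I would run the inductive descent on the lowest-degree-in-$T$ coefficients exactly as in \cite[Proposition 6.1]{EHS}. Setting $T=0$ gives $s_m(X_1,X_2)f_{2n-m}(X_1,X_2) = -R(X_1,X_2)$; since $z^{2n+2}-1$ has no repeated roots, $s_m$ and $f_{2n-m}$ are coprime, and both are products of cyclotomic-type factors of $R$. Comparing the coefficient of $T^1$ yields $s_{m-1}f_{2n-m} + s_m f_{2n-m-1} = p_{2n-1}X_1^{2n-1}$; because $R$ and its factors are symmetrical while $X_1^{2n-1}$ is visibly not divisible by any nontrivial symmetrical polynomial other than via cancellation, one deduces $p_{2n-1}=0$ and then $s_m \mid s_{m-1}$, $f_{2n-m}\mid f_{2n-m-1}$ up to the coprimality bookkeeping. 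Iterating this up through the coefficients of $T^2, T^3, \dots$ forces $p_{2n-1}=p_{2n-2}=\cdots=p_1=0$ and pins down $S_0$ and $F$ to be (scalar multiples of) powers of $T$ times fixed divisors of $R$; the primitivity hypothesis — that $S_0(0,1,y_0)=0$ for a primitive $(2n+2)$-th root of unity $y_0$ — is what selects the branch in which all the "mixed" coefficients vanish. The upshot is $P(T,X_1) = p_0 T^{2n}$, i.e. $b=p_0$.

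The main obstacle I anticipate is the bookkeeping in the inductive step: one must carefully track which cyclotomic factors of $R(X_1,X_2)$ go into $s_m$ versus $f_{2n-m}$, and verify at each stage that the symmetry of $S_0$ together with the appearance of the pure power $X_1^k$ on the right-hand side of the $T^k$-coefficient equation forces the corresponding $p_{2n-k}$ to vanish rather than allowing a genuinely mixed solution. This is precisely the content of the argument in \cite[Section 6]{EHS}, adapted by replacing their polynomial $R$ with $\Sigma_n(X_1^2,X_2^2)$ and using that $z^{2n+2}-1$ is separable; since the combinatorial skeleton is identical, I would present the adaptation and refer to \cite{EHS} for the repetitive parts. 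The case $p_{2n}=1$ (Proposition \ref{A2}) is handled the same way, with $z^{2n+2}-1$ replaced by $z^{2n}-1$ and the extra factor $z^2$ accounting for the "$+\{0\}$" in the root set, leading to $P(T,X_1)=\Sigma_n(bT^2,X_1^2)$.
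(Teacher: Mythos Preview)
Your approach is \emph{not} the one in \cite[Proposition~6.1]{EHS}, and hence not the paper's either. The paper (following EHS) argues analytically: since $y_0$ is a simple root of $P(0,1)-R(1,z)$, the implicit function theorem produces a germ $y(x)$ with $S_0(x(1+y(x)),1,y(x))=0$ and $y(0)=y_0$; symmetry of $S_0$ in $X_1,X_2$ then yields the \emph{pair} of identities
\[
P(x(1+y(x)),1)-R(y(x),1)=0,\qquad P(x(1+y(x)),y(x))-R(y(x),1)=0,
\]
and differentiating $m$ times at $x=0$ gives a $2\times 2$ linear system in $p_{2n-m}$ and $y^{(m)}(0)$ whose determinant $(1-y_0^{2n-m})(1+y_0)^mR'(y_0,1)$ is nonzero precisely because $y_0$ is a primitive $(2n+2)$-th root. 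This is where primitivity enters, and it is used at every stage, not merely to ``select a branch''.

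Your algebraic coefficient-comparison idea can in fact be made rigorous, but the write-up has a real gap: the step ``one deduces $p_{2n-1}=0$'' needs the argument that $s_m\mid p_{2n-1}X_1^{2n-1}$ and (by symmetry of $s_m$) $s_m\mid p_{2n-1}X_2^{2n-1}$, hence $s_m\mid p_{2n-1}(X_1^{2n-1}-X_2^{2n-1})$, so if $p_{2n-1}\ne 0$ then $y_0^{2n-1}=1$, contradicting primitivity. You never state this, and you cannot defer it to \cite{EHS} because EHS does not run a coefficient-comparison induction --- your appeal ``refer to \cite{EHS} for the repetitive parts'' therefore points to a proof that is not there. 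There is also a subtlety you do not address: once the induction reaches $k=\min(m,2n-m)$ the leading constants $s_0$ or $f_0$ enter, and one must argue separately (using coprimality of $s_m$ and $f_{2n-m}$, which follows from square-freeness of $R$) that they are forced to vanish, reducing $S_0$ to a polynomial in $X_1,X_2$ alone. If you fill in these two points your route does go through, but as written the proposal is a sketch whose load-bearing citation is misdirected.
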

\begin{proof}
    Let $S_{0}(T,X_1,X_2)$ be a symmetric divisor of $P(T,X_1)-R(X_1,X_2)$ such that for a $2(n+1)$-th primitive unit root $y_0$, we have $S_0(0,1,y_0)=0$. Since $y_0$ is a simple root of $P(0,1)-R(1,z)$, $y_0$ is also a simple root of $S_0(0,1,z)$. Therefore, by the implicit function theorem, there is a germ of holomorphic function $y(x)$ in a neighborhood of $x=0$ satisfying
    $$S_0(x(1+y(x)),1,y(x))=0 \text{ and } y(0)=y_0.$$
    We now show by induction that for $m=1,\dots,2n-1$, we have $y^{(m)}(0)=p_{2n-m}=0$. As  $S_0$ is symmetric in $X_1$ and $X_2$,  we have
    \begin{align}
        &P(x(1+y(x)),1)-R(y(x),1)=0, \tag{1}\label{eq1 for p2n zero} \\
        &P(x(1+y(x)),y(x))-R(y(x),1)=0. \tag{2}\label{eq2 for p2n zero}
    \end{align}
    By taking the derivatives of (\ref{eq1 for p2n zero}) and (\ref{eq2 for p2n zero}) at $x=0$ and noting that $p_{2n}$ is $0$, we obtain
    \begin{align}
        &p_{2n-1}(1+y_0)-R'(y_0,1)y'(0)=0, \tag{1'}\label{1d of eq1 for p2n zero} \\
        &p_{2n-1}(1+y_0)y_0^{2n-1}-R'(y_0,1)y'(0)=0. \tag{2'}\label{1d of eq2 for p2n zero}
    \end{align}
    From $R(y,1)(y^2-1)=y^{2n+2}-1$, we have $R'(y,1)(y^2-1)+R(y,1)\cdot(2y)=(2n+2)y^{2n+1}$. As $R(y_0,1)$ is $0$ and $y_0$ is not $ \pm 1$, $R'(y_0,1)$ is $\frac{(2n+2)y_0^{2n+1}}{y_0^2-1}(\ne 0)$. From (\ref{1d of eq1 for p2n zero}) and (\ref{1d of eq2 for p2n zero}), we get $p_{2n-1}(1+y_0)(y_0^{2n-1}-1)=0$. Since $y_0$ is primitive, $y_0^{2n-1}-1$ and $1+y_0$ are not zero. We have $p_{2n-1}=0$. By (\ref{1d of eq1 for p2n zero}) and noting that $R'(y_0,1)\ne0$, $y'(0)$ is  $0$.\\

    For the case $m\geq 2$, we assume by induction that $y^{(m')}(0)=p_{2n-m'}=0$ for $m'<m$. Then the $m$-th derivatives of (\ref{eq1 for p2n zero}) and (\ref{eq2 for p2n zero}) satisfy the following equations:
    \begin{align}
        &m!p_{2n-m}(1+y_0)^m-R'(y_0,1)y^{(m)}(0)=0, \tag{$1^m$}\label{md of eq1 for p2n zero} \\
        &m!p_{2n-m}(1+y_0)^my_0^{2n-m}-R'(y_0,1)y^{(m)}(0)=0. \tag{$2^m$}\label{md of eq2 for p2n zero}
    \end{align}
    Since $y_0$ is primitive, $y_0^{2n-m}-1$ is not zero. We have $y^{(m)}(0)=p_{2n-m}=0$ as above.
\end{proof}
\begin{proposition}\label{A2}
    If $p_{2n}$ is $1$, then $P(T,X_1)$ is $\Sigma_{n}(bT^2,X_1^2)$ for some $b\in \mathbb{Q}$.
\end{proposition}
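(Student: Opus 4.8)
The plan is to imitate the proof of Proposition \ref{A1}, the new feature being that $p_{2n}=1$ makes $P(0,X_1)=X_1^{2n}$ nonzero, so the coefficient relations no longer decouple one at a time. By Definition \ref{prim app solu} I fix a symmetrical divisor $S_0(T,X_1,X_2)$ of $P(T,X_1)-R(X_1,X_2)$ and a primitive $2n$-th root of unity $y_0$ with $S_0(0,1,y_0)=0$. By Lemma \ref{Top coeff of app solu}~$(3)$, $P(0,1)-R(1,z)=-z^{2}(z^{2n-2}+z^{2n-4}+\cdots+1)$, whose roots other than $0$ are simple; since $y_0\neq 0,\pm1$ it is a simple root of $S_0(0,1,z)$, so the implicit function theorem produces a holomorphic germ $y(x)$ at $x=0$ with $S_0\bigl(x(1+y(x)),1,y(x)\bigr)=0$ and $y(0)=y_0$. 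Using that $S_0$ is symmetrical in $X_1,X_2$, I then get, for $x$ near $0$,
\begin{align*}
&P\bigl(x(1+y(x)),1\bigr)-R\bigl(y(x),1\bigr)=0,\\
&P\bigl(x(1+y(x)),y(x)\bigr)-R\bigl(y(x),1\bigr)=0.
\end{align*}

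Next I would extract relations by differentiating these identities repeatedly at $x=0$. Writing $T(x)=x(1+y(x))$ one has $T(0)=0$, $T'(0)=1+y_0\neq 0$, and from $R(y,1)(y^{2}-1)=y^{2n+2}-1$ together with $y_0^{2n}=1$ one obtains $R(y_0,1)=1$ and $R'(y_0,1)=2ny_0/(y_0^{2}-1)\neq 0$. The first derivatives force $y'(0)=0$ and $p_{2n-1}=0$; here, in contrast to Proposition \ref{A1}, the term $\partial_{X_1}P(0,y_0)\,y'(0)=2ny_0^{2n-1}y'(0)$ also intervenes, which is the only structural change. I would then run an induction on the order $m$ of differentiation. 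A parity argument (showing inductively that $T(x)$ is odd and $y(x)$ is even, so that odd-order and even-order data separate) reduces the $m$-th derivatives of the two identities to a linear system
\begin{align*}
&m!\,(1+y_0)^{m}\,p_{2n-m}-R'(y_0,1)\,y^{(m)}(0)=\ell_1,\\
&m!\,y_0^{-m}(1+y_0)^{m}\,p_{2n-m}+\bigl(2ny_0^{-1}-R'(y_0,1)\bigr)y^{(m)}(0)=\ell_2,
\end{align*}
in the unknowns $p_{2n-m}$ and $y^{(m)}(0)$, whose right-hand sides $\ell_1,\ell_2$ are universal polynomials in the lower-order data $p_{2n-j},y^{(j)}(0)$ $(j<m)$ and vanish when $m$ is odd. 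The determinant of this system is $m!\,(1+y_0)^{m}\bigl(2ny_0^{-1}-R'(y_0,1)(1-y_0^{-m})\bigr)$, which a short cyclotomic computation shows is nonzero for all $m$ with $1\le m\le 2n$ and $m\neq 2$. Hence: for $m$ odd one gets $y^{(m)}(0)=p_{2n-m}=0$; for even $m\ge 4$ the system determines $p_{2n-m}$ and $y^{(m)}(0)$ from the lower data; and for $m=2$ the two equations coincide, leaving $b:=p_{2n-2}$ free with $y''(0)=2b(1+y_0)^{2}/R'(y_0,1)$. Viewing the $p$'s and the numbers $y^{(j)}(0)$ jointly, this is a triangular recursion whose unique solution, once $b=p_{2n-2}$ is fixed, determines every coefficient of $P$; in particular all odd-index coefficients vanish.

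To identify $P$ I would compare it with the explicit candidate $\Sigma_n(bT^{2},X_1^{2})$, $b:=p_{2n-2}$. This polynomial is itself a primitive approximate solution with top coefficient $1$: one has $\Sigma_n(bT^{2},X_1^{2})-R(X_1,X_2)=(bT^{2}-X_2^{2})\,\Sigma_{n-1}(bT^{2},X_1^{2},X_2^{2})$, and the factor $\Sigma_{n-1}(bT^{2},X_1^{2},X_2^{2})$ is a proper divisor symmetrical in $X_1,X_2$ whose restriction to $T=0$, $X_1=1$ equals $1+X_2^{2}+\cdots+X_2^{2n-2}$ and therefore vanishes at every $2n$-th root of unity other than $\pm1$, in particular at $y_0$. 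Consequently the recursion above, carried out with the same $y_0$, applies equally to $\Sigma_n(bT^{2},X_1^{2})$, whose coefficient of $X_1^{2n-2}T^{2}$ is again $b$; by uniqueness of the recursion, $P(T,X_1)=\Sigma_n(bT^{2},X_1^{2})$, and $b\in\mathbb{Q}$ because $p_{2n-2}\in\mathbb{Q}$. The step I expect to be the main obstacle is precisely this loss of the coefficient-by-coefficient decoupling that made Proposition \ref{A1} easy: the first even coefficient stays a free parameter and propagates through the recursion, so $P$ has to be recognised as a complete homogeneous symmetric polynomial by this comparison with $\Sigma_n(bT^{2},X_1^{2})$ (this is also where the irreducibility of $\Sigma_{n-1}$ from Proposition \ref{A3} can be brought in).
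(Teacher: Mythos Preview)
Your proposal is correct and is essentially the paper's own argument: there the comparison polynomial $P_-:=\Sigma_n(p_{2n-2}T^2,X_1^2)$ and its symmetrical divisor $S_-:=\Sigma_{n-1}(p_{2n-2}T^2,X_1^2,X_2^2)$ are introduced from the outset, and the induction is run for $P_+:=P$ and $P_-$ in parallel, establishing $p^+_{2n-m}=p^-_{2n-m}$ and $y^{(m)}_+(0)=y^{(m)}_-(0)$ at each step via exactly the $2\times 2$ linear system you write down (the paper simplifies the second row by first multiplying by $y_0^2$ and using the identity $2ny_0-y_0^2R'(y_0,1)=-R'(y_0,1)$, which gives the determinant $m!(1+y_0)^mR'(y_0,1)(1-y_0^{2n-m+2})$, equivalent to yours). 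Two minor remarks: your separate parity argument, while correct, is unnecessary, since the parallel comparison already forces the odd coefficients of $P$ to match those of $P_-$ (namely $0$); and your closing reference to Proposition~\ref{A3} is misplaced---the irreducibility of $\Sigma_{n-1}$ plays no role in this proof and is used only later, in Proposition~\ref{exclude l odd}.
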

\begin{proof}
    Let $S_{+}(T,X_1,X_2)$ be a symmetric divisor of $P(T,X_1)-R(X_1,X_2)$ such that for a $2n$-th primitive unit root $y_0$, we have $S_+(0,1,y_0)=0$. Note the equation $\Sigma_{n}(p_{2n-2}T^2,X_1^2)-R(X_1,X_2)=(p_{2n-2}T^2-X_2^2)\Sigma_{n-1}(p_{2n-2}T^2,X_1^2,X_2^2)$ (see \cite[Section 7.2]{EHS} for example). Let $S_-(T,X_1,X_2)$ be $\Sigma_{n-1}(p_{2n-2}T^2,X_1^2,X_2^2)$, then $S_-(0,1,y_0)$ is also $0$. Denote $P(T,X_1)$ by $P_+(T,X_1)$ and denote $\Sigma_{n}(p_{2n-2}T^2,X_1^2)$ by $P_-(T,X_1)$. We are going to show that $P_{+}$ equals $P_-$.
    Let $y_{\pm}(x)$ be germs of holomorphic functions satisfying
     $$S_\pm(x(1+y_\pm(x)),1,y_\pm(x))=0 \text{ and } y_\pm(0)=y_0.$$
     As $S_\pm$ is symmetric in $X_1$ and $X_2$, we have equations:
    \begin{align}
        &P_\pm(x(1+y_\pm(x)),1)-R(y_\pm(x),1)=0, \tag{$1_\pm$}\label{eq1 for p2n 1} \\
        &P_\pm(x(1+y_\pm(x)),y_\pm(x))-R(y_\pm(x),1)=0. \tag{$2_\pm$}\label{eq2 for p2n 1}
    \end{align}
    Let $p^+_{2n-m}$ (resp. $p^-_{2n-m}$) be the coefficient of $X_1^{2n-m}T^m$ in $P_+(T,X_1)$ (resp. $P_-(T,X_1)$). We prove  $p^+_{2n-m}=p^-_{2n-m}$ and $y^{(m)}_+(0)=y^{(m)}_-(0)$ by induction for $m$ $(0\leq m \leq 2n)$.

    When $m$ is $0$, we have $p_{2n}^+=p_{2n}^{-}=1$, $y_{+}(0)=y_-(0)=y_0$. We next show $p_{2n-1}^{+}=p_{2n-1}^-=0$ and $y'_{+}(0)=y'_{-}(0)=0$. Taking the derivatives of (\ref{eq1 for p2n 1}) and (\ref{eq2 for p2n 1}) at $x=0$, we get
    \begin{align}
        &p_{2n-1}^{\pm}(1+y_0)-R'(y_0,1)y'_{\pm}(0)=0, \tag{$1'_\pm$}\label{1d of eq1 for p2n 1} \\
        &p_{2n-1}^{\pm}(1+y_0)y_0^{2n-1}+2ny_0^{2n-1}y'_{\pm}(0)-R'(y_0,1)y'_{\pm}(0)=0. \tag{$2'_\pm$}\label{1d of eq2 for p2n 1}
    \end{align}
    From $R(y,1)(y^2-1)=y^{2n+2}-1$, we have $R'(y,1)(y^2-1)+R(y,1)\cdot(2y)=(2n+2)y^{2n+1}$. Note $R(y_0,1)=1$ (as $y_0$ is a $2n$-th primitive unit root). Substituting it in the above equation, we have
    \begin{align}
        2ny_0-y_0^2R'(y_0,1)=-R'(y_0,1). \tag{\dag}\label{rel for cal}
    \end{align}
    Multiplying (\ref{1d of eq2 for p2n 1}) by $y_0^2$ and using the relation (\ref{rel for cal}), one has
    \begin{align*}
        &p_{2n-1}^{\pm}(1+y_0)y_0-R'(y_0,1)y_{\pm}'(0)=0. \tag{$y_0^2\cdot 2'_\pm$}\\
        &p_{2n-1}^{\pm}(1+y_0)-R'(y_0,1)y_{\pm}'(0)=0, \tag{$1'_\pm$}
    \end{align*}
   Since $y_0$ is primitive, the number $ 
\left|\begin{array}{ccc} 
    y_0(1+y_0) &    -R'(y_0,1)    \\ 
    1+y_0 &    -R'(y_0,1)   
\end{array}\right| 
=(1-y_0)(1+y_0)R'(y_0,1)$ is not $0$. Then one obtains $p_{2n-1}^{\pm}=y_{\pm}'(0)=0$. \\

When $m$ is at least $2$, by induction, we may assume  $p^+_{2n-m'}=p^-_{2n-m'}$ and $y_{+}^{(m')}(0)=y_{-}^{(m')}(0)$ for $m'<m$. By taking the $m$-th derivatives of (\ref{eq1 for p2n 1}) and (\ref{eq2 for p2n 1}) at $x=0$, we have 
\begin{align}
        &m!p_{2n-m}^{\pm}(1+y_0)^m-R'(y_0,1)y^{(m)}_{\pm}(0)+T^{1}_{\pm}=0, \tag{$1^m_\pm$}\label{md of eq1 for p2n 1} \\
        &m!p_{2n-m}^{\pm}(1+y_0)^my_0^{2n-m}+2ny_0^{2n-1}y^{(m)}_{\pm}(0)-R'(y_0,1)y^{(m)}_{\pm}(0)+T^{2}_{\pm}=0, \tag{$2^m_\pm$}\label{md of eq2 for p2n 1}
    \end{align}
where $T^{i}_{\pm}$ are the remaining terms satisfying $T^1_-=T^1_+
$ and $T^2_-=T^2_+$. For the case $m=2$, by construction, we automatically have $p_{2n-2}^{-}=p_{2n-2}=p_{2n-2}^{+}$. Then one obtains $y_{+}^{(2)}(0)=y_-^{(2)}(0)$ from (\ref{md of eq1 for p2n 1}). Suppose now $m$ is at least $3$, multiplying (\ref{md of eq2 for p2n 1}) by $y_0^2$ and using (\ref{rel for cal}), we have 
\begin{align}
    m!p_{2n-m}^{\pm}(1+y_0)^my_0^{2n-m+2}-R'(y_0,1)y_{\pm}^{(m)}(0)+y_0^2T^{2}_{\pm}=0. \tag{$y_0^2\cdot 2^m_\pm$}\label{solve}
\end{align}
Note that $ 
\left|\begin{array}{ccc} 
    m!(1+y_0)^my_0^{2n-m+2} &    -R'(y_0,1)    \\ 
    m!(1+y_0)^m &    -R'(y_0,1)   
\end{array}\right| 
=m!(1-y_0^{2n-m+2})(1+y_0)^mR'(y_0,1)$ is not zero for $m\geq 3$. By solving the system of linear equations \{(\ref{solve}), (\ref{md of eq1 for p2n 1})\} (view $p_{2n-m}^{\pm}$ and $y_{\pm}^{(m)}(0)$ as indeterminate), we have $p_{2n-m}^+=p_{2n-m}^-$ and $y_-^{(m)}(0)=y_+^{(m)}(0)$.
\end{proof}
\begin{proposition}\label{A3}
    $\Sigma_n(T^2,X_1^2,X_2^2)$ is irreducible in $\mathbb{C}[T,X_1,X_2]$.
\end{proposition}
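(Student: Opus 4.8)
The polynomial $\Sigma_n(T^2, X_1^2, X_2^2)$ is the complete symmetric polynomial of degree $n$ in the three variables $u = T^2$, $v = X_1^2$, $w = X_2^2$, i.e. $\Sigma_n(u,v,w) = \sum_{i+j+k=n} u^i v^j w^k$. The plan is first to show that $\Sigma_n(u,v,w)$ is irreducible in $\mathbb{C}[u,v,w]$, and then to deduce irreducibility of $\Sigma_n(T^2,X_1^2,X_2^2)$ in $\mathbb{C}[T,X_1,X_2]$ from this. For the first part, I would use the identity
\[
\Sigma_n(u,v,w)\,(u-v)(v-w)(w-u) = -\bigl(u^{n+2}(v-w) + v^{n+2}(w-u) + w^{n+2}(u-v)\bigr),
\]
which expresses $\Sigma_n$ (up to the Vandermonde factor) in terms of the Schur-type polynomial on the right. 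Equivalently one can use the two-variable specialization: setting $w = 0$ gives $\Sigma_n(u,v,0) = \dfrac{u^{n+1} - v^{n+1}}{u - v} = \prod_{\zeta}(u - \zeta v)$, the product running over the $(n+1)$-st roots of unity $\zeta \ne 1$, and each factor $u - \zeta v$ is distinct and irreducible.

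The key step is the following. Suppose $\Sigma_n(u,v,w) = A(u,v,w)B(u,v,w)$ is a nontrivial factorization in $\mathbb{C}[u,v,w]$. Restricting to $w = 0$, each of $A(u,v,0)$, $B(u,v,0)$ is a product of some of the distinct linear forms $u - \zeta v$; in particular neither is divisible by $u$ or $v$, so $\deg_u A(u,v,0) = \deg_u A$ and similarly for $B$, and both $A$ and $B$ genuinely involve $u$ (and by the $S_3$-symmetry of $\Sigma_n$, after relabeling, one sees each factor must involve all three variables or lead to a contradiction). Now I would exploit symmetry: $\Sigma_n$ is invariant under all permutations of $u,v,w$. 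Pick any linear factor $u - \zeta v$ of $A(u,v,0)$; then applying the transposition swapping $u \leftrightarrow w$ shows $\Sigma_n$ (hence $AB$) restricted to $v = 0$ has $w - \zeta u$ as a factor — one tracks how the irreducible factors $u-\zeta v$ of $\Sigma_n(u,v,0)$ are permuted. Carefully bookkeeping which roots of unity appear in $A(u,v,0)$ versus $B(u,v,0)$, combined with the constraint that the factorization must be compatible with all three coordinate restrictions $u=0$, $v=0$, $w=0$ simultaneously, forces the set of $\zeta$ assigned to $A$ to be both empty and everything — a contradiction — unless $A$ or $B$ is a constant. (An alternative and perhaps cleaner route: show directly that $\Sigma_n(u,v,w)$ defines a smooth hypersurface away from a codimension $\ge 2$ locus, or that its projectivization is irreducible because a generic line section $\Sigma_n(u, v, \lambda) $ in two variables is squarefree with Galois-irreducible-over-$\mathbb{C}(\lambda)$ behavior; but the root-of-unity bookkeeping is the most elementary.)

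Finally, to pass from $\Sigma_n(u,v,w)$ irreducible to $\Sigma_n(T^2,X_1^2,X_2^2)$ irreducible, I would argue as follows. Any factor of $\Sigma_n(T^2,X_1^2,X_2^2)$ in $\mathbb{C}[T,X_1,X_2]$ gives, after the substitution $u = T^2$ etc., a factor of $\Sigma_n(u,v,w)$ in the subring generated by squares — but more usefully, one checks homogeneity and parity: $\Sigma_n(T^2,X_1^2,X_2^2)$ is homogeneous of degree $2n$ and even separately in $T$, $X_1$, $X_2$. A nontrivial factorization $\Sigma_n(T^2,X_1^2,X_2^2) = P \cdot Q$ with $P,Q$ non-constant: look at $X_1 = X_2 = 0$, where the polynomial becomes $T^{2n}$, so $P$ and $Q$ are (up to scalar) powers of $T$ times something — more precisely $P|_{X_1=X_2=0} = c\,T^{2a}$, $Q|_{X_1=X_2=0} = c'\,T^{2b}$ with $a+b = n$. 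Then consider the $\mathbb{Z}/2 \times \mathbb{Z}/2 \times \mathbb{Z}/2$ action $(T,X_1,X_2) \mapsto (\pm T, \pm X_1, \pm X_2)$; since $\Sigma_n(T^2,X_1^2,X_2^2)$ is invariant, the factors are permuted, and because $\Sigma_n(u,v,w)$ is irreducible the two squared factors cannot split further, which pins down that $P$ and $Q$ are themselves even in all three variables, hence of the form $P = \tilde P(T^2,X_1^2,X_2^2)$, contradicting irreducibility of $\Sigma_n(u,v,w)$.

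The main obstacle I anticipate is the combinatorial bookkeeping in the key step: verifying that no distribution of the $(n+1)$-st roots of unity (from the $w=0$ restriction) among two putative factors $A$, $B$ is simultaneously consistent with the analogous distributions forced by the $u=0$ and $v=0$ restrictions under the $S_3$-symmetry. One has to be careful that the linear factors $u - \zeta v$ on $w=0$ really do lift to genuine (not merely numerical) irreducible components of $\Sigma_n$, and that the symmetry argument is applied to actual polynomial factors rather than just their restrictions; handling the edge cases (small $n$, the behavior of the factor potentially divisible by $u+v$ or similar when $n$ is even) will require a little care, and it may be cleanest to invoke Proposition \ref{A1}/\ref{A2}-style arguments or the identity with the Vandermonde determinant to sidestep the casework entirely.
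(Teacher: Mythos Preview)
The paper takes precisely the ``alternative and perhaps cleaner route'' you mention in passing: it shows directly that the hypersurface $\{\Sigma_n(T^2,X_1^2,X_2^2)=0\}\subset\mathbb{C}^3$ is smooth away from the origin, which forces the defining homogeneous polynomial to be irreducible. Smoothness of $\{\Sigma_n(u,v,w)=0\}$ on $\mathbb{C}^3\setminus\{0\}$ is quoted from \cite[Lemma~7.2]{EHS}; since $(T,X_1,X_2)\mapsto(T^2,X_1^2,X_2^2)$ is a local isomorphism off $\{TX_1X_2=0\}$, it remains only to check by hand that no singular point lies on a coordinate hyperplane, and this reduces to the observation that $\Sigma_n(z^2,1)=(z^{2n+2}-1)/(z^2-1)$ has simple roots.

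Your Step~2 has a real gap. Irreducibility of $\Sigma_n(u,v,w)$ together with the $(\mathbb{Z}/2)^3$-action only shows that the irreducible factors of $\Sigma_n(T^2,X_1^2,X_2^2)$ form a single $G$-orbit whose product is $G$-invariant; it does \emph{not} force that orbit to be a singleton, so it does not pin down that $P$ and $Q$ are each even in all three variables. Concretely, the logic you outline would equally ``prove'' that $g(T^2,X_1^2,X_2^2)$ is irreducible whenever $g(u,v,w)$ is---but $g(u,v,w)=uv-w^2$ is irreducible while $T^2X_1^2-X_2^4=(TX_1-X_2^2)(TX_1+X_2^2)$ is not, the two factors being swapped by $T\mapsto-T$ and neither being even in $T$ or $X_1$. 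To rule out a nontrivial orbit for $\Sigma_n$ you need a further property specific to $\Sigma_n$; the cleanest such property is exactly the smoothness along the coordinate hyperplanes that the paper establishes.

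Your Step~1 is, as you yourself flag, only a plan: the bookkeeping of how the $(n+1)$-st roots of unity distribute among $A$ and $B$ across the three restrictions $u=0$, $v=0$, $w=0$ is asserted to yield a contradiction, but no mechanism is given, and it is not obvious how the three subsets constrain one another. Since Step~2 already seems to require smoothness-type input, the paper's direct route is both shorter and sidesteps this combinatorics entirely.
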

\begin{proof}
    It suffices to show that the variety $V:=\{(T,X_1,X_2)|\Sigma_n(T^2,X_1^2,X_2^2)=0\}\subset \mathbb{C}^3$ is smooth on $\mathbb{C}^3\backslash \{0\}$. By \cite[Lemma 7.2]{EHS}, the variety defined by $\Sigma_n(T,X_1,X_2)=0$ is smooth on $\mathbb{C}^3\backslash 0$. Note that the map $(T,X_1,X_2)\mapsto (T^2,X_1^2,X_2^2)$ is a local isomorphism outside the locus defined by $TX_1X_2=0$, $V$ is smooth on $\mathbb{C}^3\backslash \{TX_1X_2=0\}$.\\

    Now suppose $(t,u,v)(\ne (0,0,0))$ is a singular point of $V$, then one of $t,u$ and $v$ is $0$. By symmetry, we assume that $t$ is $0$. Then there are  equations
    \[\frac{\partial}{\partial T}\Sigma_n(T^2,X_1^2,X_2^2)|_{(0,u,v)}=\frac{\partial}{\partial X_1}\Sigma_n(T^2,X_1^2,X_2^2)|_{(0,u,v)}=\frac{\partial}{\partial X_2}\Sigma_n(T^2,X_1^2,X_2^2)|_{(0,u,v)}=0.\]
    Furthermore, we have  equations:
    \begin{align*}
        &\frac{\partial}{\partial X_1}\Sigma_n(T^2,X_1^2,X_2^2)|_{(0,u,v)}=\frac{\partial}{\partial X_1}\Sigma_n(X_1^2,X_2^2)|_{(u,v)}=0,\\
        &\frac{\partial}{\partial X_2}\Sigma_n(T^2,X_1^2,X_2^2)|_{(0,u,v)}=\frac{\partial}{\partial X_2}\Sigma_n(X_1^2,X_2^2)|_{(u,v)}=0.
    \end{align*}
    Without loss of generality, we may assume $v$ is not $0$. As $\Sigma_n(u^2,v^2)(=\Sigma_n(0,u^2,v^2))$  vanishes, $u/v$ is a root of $\Sigma_n(z^2,1)=0$ with multiplicity greater than $2$. 
    But $\Sigma_n(z^2,1)=\frac{z^{2n+2}-1}{z^2-1}$ has no multiple roots, which is a contradiction.
\end{proof}
\bibliography{ref}
\bibliographystyle{abbrv}
\end{document}